\newtheorem*{rep@theorem}{\rep@title}
\newcommand{\newreptheorem}[2]{%
\newenvironment{rep#1}[1]{%
 \def\rep@title{#2 \ref{##1}}%
 \begin{rep@theorem}}%
 {\end{rep@theorem}}}
\newtheorem{theorem}{Theorem}[section]
\newtheorem{lemma}[theorem]{Lemma}
\newtheorem{proposition}[theorem]{Proposition}
\newtheorem{conjecture}[theorem]{Conjecture}
\newtheorem{question}[theorem]{Question}
\newtheorem{problem}[theorem]{Problem}
\theoremstyle{definition}
\newtheorem{definition}[theorem]{Definition}
\newtheorem{remark}[theorem]{Remark}
\DeclareMathOperator{\rc}{rc}
\DeclareMathOperator{\Des}{Des}
\DeclareMathOperator{\Comp}{Comp}
\DeclareMathOperator{\Part}{Part}
\DeclareMathOperator{\lmax}{lmax}
\DeclareMathOperator{\tl}{tl}
\DeclareMathOperator{\alt}{alt}
\DeclareMathOperator{\tla}{tla}
\DeclareMathOperator{\nib}{nib}
\begin{document}

\title[]{Quantifying Noninvertibility in Discrete Dynamical Systems} \keywords{}
\subjclass[2010]{}

\author[]{Colin Defant}
\address[]{Fine Hall, 304 Washington Rd., Princeton, NJ 08544}
\email{cdefant@princeton.edu}
\author[]{James Propp}
\address[]{74 Gilbert Road, Belmont, MA 02478}
\email{jamespropp@gmail.com}

\begin{abstract}
Given a finite set $X$ and a function $f:X\to X$, we define the \emph{degree of noninvertibility} of $f$ to be $\displaystyle\deg(f)=\frac{1}{|X|}\sum_{x\in X}|f^{-1}(f(x))|$. This is a natural measure of how far the function $f$ is from being bijective. We compute the degrees of noninvertibility of some specific discrete dynamical systems, including the Carolina solitaire map, iterates of the bubble sort map acting on permutations, bubble sort acting on multiset permutations, and a map that we call ``nibble sort." We also obtain estimates for the degrees of noninvertibility of West's stack-sorting map and the Bulgarian solitaire map. 
We then turn our attention to arbitrary functions and their iterates. In order to compare the degree of noninvertibility of an arbitrary function $f:X\to X$ with that of its iterate $f^k$, we prove that \[\max_{\substack{f:X\to X\\ |X|=n}}\frac{\deg(f^k)}{\deg(f)^\gamma}=\Theta(n^{1-1/2^{k-1}})\] for every real number $\gamma\geq 2-1/2^{k-1}$. We end with several conjectures and open problems.  
\end{abstract}

\maketitle

\section{Introduction}

Functions between finite sets play a fundamental role in classical enumerative and algebraic combinatorics, as they are often used to transfer combinatorial information from one set of objects to another. The goal of the recently-introduced field of dynamical algebraic combinatorics is to study the dynamics of functions $f:X\to X$, where $X$ is a finite set (usually of combinatorial interest). In this case, we say $f$ is a \emph{discrete dynamical system}. In both of these situations, it is very common to consider functions that are injective or even bijective. However, the world is not always so simple; some of the most intriguing combinatorial functions are not $1$-to-$1$. The purpose of this article is to introduce and explore a natural way of measuring how far a map is from being injective. We state the definition for arbitrary maps between finite sets, but we will focus our attention in most of the article on discrete dynamical systems. In this case, we are actually measuring how far the function is from being bijective.  

\begin{definition}
Let $X$ and $Y$ be finite sets, and let $f:X\to Y$ be a function. The \emph{degree of noninjectivity} of $f$ is the quantity \[\deg(f)=\frac{1}{|X|}\sum_{x\in X}|f^{-1}(f(x))|.\] When $X=Y$, we call $\deg(f)$ the \emph{degree of noninvertibility} of $f$. We often write \[\deg(f : X \to Y)\] when we wish to explicitly specify the domain $X$ and codomain $Y$.
\end{definition}

It is straightforward to verify that \[\deg(f)=\frac{1}{|X|}\sum_{y\in Y}|f^{-1}(y)|^2;\] this is the formulation that we will use in the remainder of the article. For brevity, we will often simply call $\deg(f)$ the ``degree" of $f$. For a function $f:X\to Y$, we necessarily have $1\leq\deg (f)\leq |X|$. The lower bound is attained if and only if $f$ is injective, and the upper bound is attained if and only if $f$ is a constant function. More generally, the degree of a $k$-to-$1$ map is precisely $k$. Notice that $|X|\deg(f)$ is the number of pairs $(x,x')\in X\times X$ such that $f(x)=f(x')$. Equivalently, if $x$ is chosen randomly from the uniform distribution on $X$, then $\deg(f)$ is the expected number of elements $x'\in X$ such that $f(x')=f(x)$. If we define a probability distribution $\nu$ on $X$ by $\nu(x)=|f^{-1}(x)|/|X|$, then $\log(|X|/\deg(f))$ is the R\'enyi entropy of $\nu$ of order $2$. The quantity $\deg(f)-1$ was also termed the ``coefficient of coalescence" of $f$ in \cite{Arney}.

In Section \ref{Sec:Specific}, we analyze the degrees of some specific families of discrete dynamical systems of combinatorial interest. The first types of systems are ``sorting maps" that act on permutations of length $n$; these are the bubble sort map, the (West) stack-sorting map, and a map that we call ``nibble sort" (which swaps the first pair of adjacent entries in a nonidentity permutation that are in decreasing order). We also consider analogues of bubble sort and nibble sort acting on words. Another type of system we consider is defined via chip-firing on a cycle graph and can be interpreted as a dynamical system on the set of binary strings of length $n$. The last two types of systems act on partitions of a fixed positive integer $n$; these are the Bulgarian solitaire map and its close relative, the Carolina solitaire map. 

Section \ref{Sec:Iterates} focuses on comparing $\deg(f)$ with $\deg(f^k)$, where $f^k$ denotes the $k^\text{th}$ iterate of $f$. The original motivation for doing this comes from a desire to show that the submultiplicativity condition $\deg(f\circ g)\leq\deg(f)\deg(g)$ can fail horribly, even in the specific case in which $f=g$. For an explicit small example of this failure, consider the map $f:\{1,2,3\}\to\{1,2,3\}$ given by $f(1)=2$ and $f(2)=f(3)=3$, which satisfies $\deg(f)=5/3$ and $\deg(f^2)=3$. We will prove that for every fixed positive integer $k$ and real number $\gamma\geq 2-1/2^{k-1}$, we have \[\max_{\substack{f:X\to X\\ |X|=n}}\frac{\deg(f^k)}{\deg(f)^\gamma}=\Theta(n^{1-1/2^{k-1}}).\] Setting $k=\gamma=2$ shows that the ratio $\deg(f^2)/\deg(f)^2$ can be as big as $\Theta(\sqrt{n})$ (but no bigger).  

We collect several open problems and conjectures in Section \ref{Sec:Conclusion}.

Before we proceed, let us mention the following more refined estimates for the degree, which will be useful in the remainder of the article. 

\begin{lemma}\label{Lem1} 
If $X$ and $Y$ are finite sets and $f:X\to Y$ is a function, then \[\frac{|X|}{|f(X)|}\leq\deg(f)\leq\max_{y\in Y}|f^{-1}(y)|.\]
\end{lemma}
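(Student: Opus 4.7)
The plan is to work directly from the identity $\deg(f)=\frac{1}{|X|}\sum_{y\in Y}|f^{-1}(y)|^2$ stated just before the lemma, and use two elementary inequalities: a trivial factoring-out-the-max bound for the upper estimate, and the Cauchy--Schwarz inequality restricted to the image $f(X)$ for the lower estimate.

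For the upper bound, I would observe that in the sum $\sum_{y\in Y}|f^{-1}(y)|^2$ we may bound one factor of $|f^{-1}(y)|$ by $\max_{y\in Y}|f^{-1}(y)|$, leaving
\[\sum_{y\in Y}|f^{-1}(y)|^2\leq\Bigl(\max_{y\in Y}|f^{-1}(y)|\Bigr)\sum_{y\in Y}|f^{-1}(y)|.\]
The remaining sum equals $|X|$ (every element of $X$ is counted exactly once among the preimages), so dividing by $|X|$ gives the desired bound.

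For the lower bound, the key observation is that only $y\in f(X)$ contribute to the sum, since $|f^{-1}(y)|=0$ otherwise. Applying Cauchy--Schwarz to the $|f(X)|$ nonzero terms yields
\[|X|^2=\Bigl(\sum_{y\in f(X)}|f^{-1}(y)|\Bigr)^2\leq|f(X)|\sum_{y\in f(X)}|f^{-1}(y)|^2=|f(X)|\cdot|X|\deg(f),\]
and dividing by $|X|\cdot|f(X)|$ gives $\deg(f)\geq|X|/|f(X)|$.

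Neither step is really an obstacle; the only minor care needed is to restrict the lower-bound sum to $f(X)$ before invoking Cauchy--Schwarz (otherwise the factor $|Y|$ appears on the right in place of $|f(X)|$, giving a weaker bound). One could also note, though it is not required for the statement, that equality holds in the upper bound iff $f$ is constant on its image (i.e.\ all nonempty fibers have the same size equal to the max), and in the lower bound iff all fibers of elements in $f(X)$ have the same size --- both by the equality cases of the two inequalities used.
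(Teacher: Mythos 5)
Your proof is correct and follows essentially the same route as the paper's: Cauchy--Schwarz over the fibers indexed by $f(X)$ for the lower bound, and pulling out $\max_{y\in Y}|f^{-1}(y)|$ from one factor and using $\sum_y|f^{-1}(y)|=|X|$ for the upper bound. The equality-case remarks are a harmless (and correct) addition not needed for the statement.
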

\begin{proof}
The Cauchy-Schwarz inequality tells us that \[\deg(f)=\frac{1}{|X|}\sum_{y\in f(X)}|f^{-1}(y)|^2\geq\frac{1}{|X|}\frac{1}{|f(X)|}\left(\sum_{y\in f(X)}|f^{-1}(y)|\right)^2=\frac{1}{|X|\,|f(X)|}|X|^2=\frac{|X|}{|f(X)|}.\] For the upper bound, let $\displaystyle M=\max_{y\in Y}|f^{-1}(y)|$. We have \[\deg(f)=\frac{1}{|X|}\sum_{y\in Y}|f^{-1}(y)|^2\leq\frac{1}{|X|}\sum_{y\in Y}|f^{-1}(y)|\cdot M=M. \qedhere\]
\end{proof}

We also pause to relate our work to a natural equivalence relation on endofunctions of a finite set. Given sets $X,Y,Z$ of the same cardinality, we say that two functions $f:X\to Y$ and $g:Y\to Z$ are \emph{pseudoconjugate} if there exist bijections $h:Y\to Z$ and $\widetilde h:X\to Y$ such that $h\circ f=g\circ \widetilde h$. This notion will appear in Sections~\ref{Sec:Chip}, where we will see that two combinatorially-defined maps $\nib:\{0,1\}^n\to\{0,1\}^n$ and $\chi:\{0,1\}^n\to\{0,1\}^n$ are pseudoconjugate even though they are not actually conjugate. 

\begin{proposition}\label{Prop:Pseudoconjugate}
Let $X,Y,Z$ be sets of the same cardinality. Two functions $f:X\to Y$ and $g:Y\to Z$ are pseudoconjugates of each other if and only if there exists a bijection $h:Y\to Z$ such that $|f^{-1}(y)|=|g^{-1}(h(y))|$ for all $y\in Y$. 
\end{proposition}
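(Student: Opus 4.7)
The proposition is essentially a bookkeeping statement about fibers, and I would prove the two directions separately.

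For the forward direction, assume $h \circ f = g \circ \widetilde h$ with $h, \widetilde h$ bijections. For any $y \in Y$, I would take the preimage of the single point $h(y) \in Z$ under both sides. The left side gives
\[
(h \circ f)^{-1}(h(y)) = f^{-1}(h^{-1}(h(y))) = f^{-1}(y),
\]
while the right side gives $(g \circ \widetilde h)^{-1}(h(y)) = \widetilde h^{-1}(g^{-1}(h(y)))$. Since $\widetilde h$ is a bijection, the latter set has cardinality $|g^{-1}(h(y))|$, yielding the required equality $|f^{-1}(y)| = |g^{-1}(h(y))|$.

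For the reverse direction, I am given a bijection $h:Y\to Z$ with $|f^{-1}(y)| = |g^{-1}(h(y))|$ for every $y \in Y$, and I must construct a bijection $\widetilde h:X\to Y$ satisfying $h \circ f = g \circ \widetilde h$. The key observation is that $\{f^{-1}(y)\}_{y \in Y}$ partitions $X$ (with some parts possibly empty), while $\{g^{-1}(z)\}_{z \in Z}$ partitions $Y$; reindexing the latter via the bijection $h$ produces the partition $\{g^{-1}(h(y))\}_{y \in Y}$ of $Y$, indexed by the same set $Y$. The hypothesis says corresponding parts of these two partitions have equal cardinality, so for each $y \in Y$ I can pick an arbitrary bijection $\widetilde h_y : f^{-1}(y) \to g^{-1}(h(y))$. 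Gluing these along the partitions yields a well-defined bijection $\widetilde h : X \to Y$.

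To finish, I would verify the defining identity: for any $x \in X$, setting $y = f(x)$ places $x$ in the block $f^{-1}(y)$, so $\widetilde h(x) = \widetilde h_y(x) \in g^{-1}(h(y))$, whence $g(\widetilde h(x)) = h(y) = h(f(x))$. There is no real obstacle here; the only conceptual step is recognizing that the fiber-cardinality condition is precisely what is needed to match up the partitions of $X$ and $Y$ induced by $f$ and $g \circ h$ in a way that allows a bijective gluing.
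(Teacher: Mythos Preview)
Your proof is correct and follows essentially the same approach as the paper: the forward direction takes preimages under both sides of $h\circ f=g\circ\widetilde h$ and uses that $\widetilde h$ is a bijection, while the reverse direction constructs $\widetilde h$ by choosing a bijection on each fiber $f^{-1}(y)\to g^{-1}(h(y))$ and gluing. The only difference is notational (the paper writes $\alpha_y$ where you write $\widetilde h_y$).
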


\begin{proof}
First suppose $f$ and $g$ are pseudoconjugate, and let $h:Y\to Z$ and $\widetilde h:X\to Y$ be bijections such that $h\circ f=g\circ h'$. For every $y\in Y$, we have $|f^{-1}(y)|=|\widetilde h^{-1}(g^{-1}(h(y)))|=|g^{-1}(h(y))|$.
 
To prove the converse, suppose there exists a bijection $h:Y\to Z$ such that $|f^{-1}(y)|=|g^{-1}(h(y))|$ for all $y\in Y$. For each $y\in Y$, there is a bijection $\alpha_y:f^{-1}(y)\to g^{-1}(h(y))$. Now simply define $\widetilde h:X\to Y$ by $\widetilde h(x)=\alpha_{f(x)}(x)$ for all $x\in X$. We readily check that $g(\widetilde h(x))=g(\alpha_{f(x)}(x))=h(f(x))$ for every $x\in X$, so $f$ and $g$ are pseudoconjugate. 
\end{proof}

It is immediate from the previous proposition that two pseudoconjugate maps must have the same degree.

\section{Specific Discrete Dynamical Systems}\label{Sec:Specific}

In this section, we consider specific families of discrete dynamical systems that are indexed by positive integers. In order to define these maps, we recall the following standard definitions. 

A \emph{permutation} is an ordering of a finite set of positive integers. We view permutations as words. Let $S_n$ be the set of permutations of the set $[n]:=\{1,\ldots,n\}$. The \emph{normalization} of a permutation $\pi=\pi_1\cdots\pi_n$ is the permutation in $S_n$ obtained by replacing the $i^\text{th}$-smallest entry in $\pi$ with $i$ for all $i\in[n]$. A \emph{descent} of a permutation $\pi=\pi_1\cdots\pi_n$ is an index $i\in[n-1]$ such that $\pi_i>\pi_{i+1}$. Let $\Des(\pi)$ be the set of descents of $\pi$. Given a tuple of positive integers ${\bf a}=(a_1,\ldots,a_r)$, we define $\mathcal W_{{\bf a}}$ to be the set of all words over the alphabet $[r]$ that contain exactly $a_i$ copies of the letter $i$ for all $i$. A \emph{composition} of a positive integer $n$ is a tuple $c=(c_1,\ldots,c_\ell)$ of positive integers that sum to $n$. The entries $c_1,\ldots,c_\ell$ are called the \emph{parts} of $c$. A \emph{partition} of a positive integer $n$ is a composition of $n$ whose parts appear in nonincreasing order. Let $\Comp(n)$ and $\Part(n)$ denote, respectively, the set of compositions of $n$ and the set of partitions of $n$. 

\subsection{Bubble Sort and Its Iterates}\label{Sec:Bubble}

Suppose $\pi$ is a permutation of length $n$ and $i\in[n-1]$. If $i\in \Des(\pi)$, let $t_i(\pi)$ be the permutation obtained from $\pi$ by swapping the $i^\text{th}$ and $(i+1)^{\text{st}}$ entries in $\pi$. If $i\not\in\Des(\pi)$, let $t_i(\pi)=\pi$. The operators $t_1,\ldots,t_{n-1}$ play an important role in algebraic combinatorics because they generate the $0$-Hecke algebra of the symmetric group $S_n$ \cite{Hivert}. 

Let ${\bf B}(\pi)=t_{n-1}\circ t_{n-2}\circ\cdots\circ t_1(\pi)$. The function ${\bf B}$ is called the \emph{bubble sort map}. An alternative recursive description of ${\bf B}$ is as follows. First, ${\bf B}$ sends the empty permutation to itself. If $\pi$ is a permutation with largest entry $m$, then we can write $\pi=LmR$ for some permutations $L$ and $R$. Then ${\bf B}(\pi)={\bf B}(L)Rm$. For example, \[{\bf B}(416352)={\bf B}(41)\,3526=143526.\] By a slight abuse of notation, we will use the same letter ${\bf B}$ to denote the restriction of ${\bf B}$ to $S_n$, which we can view as a discrete dynamical system on $S_n$. We refer the interested reader to \cite{Chung} and \cite[pages 106--110]{Knuth2} for additional information and interesting properties of the bubble sort map. 
For instance, Knuth reports that for $k\leq n-1$, the number of permutations of length $n$ that are completely sorted after $k$ iterations of ${\bf B}$ is $(k+1)^{n-k-1}(k+1)!$; from our point of view, this is a formula for the cardinality of the $k$-fold preimage of the identity permutation.

For every fixed positive integer $k$, we will give an exact formula for the degree of the iterate ${\bf B}^k:S_n\to S_n$. We first need the following preliminary definitions. Given a permutation $\pi=\pi_1\cdots\pi_n\in S_n$, let $e_j(\pi)$ be the number of entries of $\pi$ that appear to the left of $j$ and are bigger than $j$. The tuple $(e_1(\pi),\ldots,e_n(\pi))$ is called the \emph{inversion table} of $\pi$. For example, the inversion tables of $416352$ and $143526$ are $(1,4,2,0,1,0)$ and $(0,3,1,0,0,0)$, respectively. Let $I_n$ denote the set of tuples $(e_1,\ldots,e_n)$ such that $0\leq e_i\leq n-i$ for all $i$. It is not difficult to show that the map $\pi\mapsto (e_1(\pi),\ldots,e_n(\pi))$ is a bijection from $S_n$ to $I_n$. A \emph{left-to-right maximum} of $\pi$ is an entry $j$ such that $e_j(\pi)=0$. Let $\lmax(\pi)$ denote the number of left-to-right maxima of $\pi$. 
The \emph{tail length} of $\pi$, denoted $\tl(\pi)$, is the largest integer $\ell\in\{0,\ldots,n\}$ such that $\pi_i=i$ for all $i\in\{n-\ell+1,\ldots,n\}$. For example, $\tl(23145)=2$, $\tl(23154)=0$, and $\tl(12345)=5$.  

\begin{lemma}\label{Lem2}
Let $k$ be a nonnegative integer. For every permutation $\pi$, we have \[|{\bf B}^{-k}(\pi)|=\begin{cases} 0, & \mbox{if } \tl(\pi)<k; \\ k!(k+1)^{\lmax(\pi)-k}, & \mbox{if } \tl(\pi)\geq k. \end{cases}\]
\end{lemma}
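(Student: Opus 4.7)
The plan is to track ${\bf B}$ through its action on inversion tables and then count preimages coordinate-by-coordinate via the bijection $S_n \leftrightarrow I_n$. The key technical input is the decrement rule $e_j({\bf B}(\pi)) = \max(e_j(\pi) - 1, 0)$ for every value $j$, which iterates to $e_j({\bf B}^k(\pi)) = \max(e_j(\pi) - k, 0)$. I would prove the decrement rule by induction on $n$ using the recursive formula ${\bf B}(LnR) = {\bf B}(L)\,R\,n$, splitting into three cases: if $j \in L$, the entries to the left of $j$ in both $\pi$ and ${\bf B}(\pi)$ remain inside the $L$ (respectively ${\bf B}(L)$) block, so the inductive hypothesis applies directly; if $j \in R$, the entries to the left of $j$ in ${\bf B}(\pi)$ are precisely those in $\pi$ with the unique occurrence of $n$ removed, forcing $e_j$ to drop by exactly $1$ (and $e_j(\pi) \geq 1$ automatically since $n$ lies to the left of $j$); and the case $j = n$ is trivial.

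With the decrement rule in hand, the vanishing case is immediate. Since ${\bf B}$ always places $n$ at the last position and subsequent bubble sorts preserve any sorted suffix, $\tl({\bf B}^k(\sigma)) \geq k$ for every $\sigma \in S_n$ (assuming $k \leq n$; small cases are easy), so $\tl(\pi) < k$ forces $|{\bf B}^{-k}(\pi)| = 0$. Now assume $\tl(\pi) \geq k$. A permutation $\sigma$ with inversion table $(f_1, \ldots, f_n) \in I_n$ satisfies ${\bf B}^k(\sigma) = \pi$ iff $\max(f_j - k, 0) = e_j(\pi)$ for every $j$. These constraints are independent across $j$, so $|{\bf B}^{-k}(\pi)|$ factors as a product of per-coordinate counts. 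When $e_j(\pi) > 0$ the unique candidate is $f_j = e_j(\pi) + k$; the hypothesis $\tl(\pi) \geq k$ means the values $n-k+1, \ldots, n$ occupy the last $k$ positions, which forces $e_j(\pi) \leq n - k - j$ for every $j \leq n-k$ with $e_j(\pi) > 0$, so $f_j \leq n - j$ is a legitimate inversion-table entry. When $e_j(\pi) = 0$ the admissible $f_j$ are exactly $\{0, 1, \ldots, \min(k, n-j)\}$.

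Assembling the product: the positions $j = n-k+1, \ldots, n$ all have $e_j(\pi) = 0$ by the tail hypothesis and contribute $\prod_{j=n-k+1}^n (n - j + 1) = k!$; among $j \leq n-k$, those with $e_j(\pi) = 0$ are the left-to-right maxima of $\pi$ other than $n - k + 1, \ldots, n$ and therefore number $\lmax(\pi) - k$, each contributing $k + 1$; the remaining $j$ contribute $1$ each. The product is $k!\,(k+1)^{\lmax(\pi) - k}$, as claimed. The main obstacle is the inversion-table decrement rule of Step 1; once that is in place, the rest is elementary bookkeeping that decomposes cleanly across coordinates.
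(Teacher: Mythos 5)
Your proposal is correct and follows essentially the same route as the paper: the inversion-table decrement rule $e_j(\mathbf{B}(\sigma))=\max\{0,e_j(\sigma)-1\}$ (which the paper cites as known and which you prove via the recursion $\mathbf{B}(LmR)=\mathbf{B}(L)Rm$), followed by the same coordinate-by-coordinate count of admissible inversion tables, giving $k!$ from the last $k$ positions and $(k+1)^{\lmax(\pi)-k}$ from the zero entries among the first $n-k$. The extra details you supply (the induction for the decrement rule and the check $e_j(\pi)\le n-k-j$) are sound and merely flesh out steps the paper leaves to the reader.
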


\begin{proof}
It is known (and straightforward to prove using the recursive description mentioned above) that bubble sort has the effect of decreasing each nonzero entry in the inversion table of a permutation by $1$. In other words, $e_j({\bf B}(\sigma))=\max\{0,e_j(\sigma)-1\}$ for all $j$. This implies that $e_\ell({\bf B}^k(\sigma))=0$ for each $\ell\in\{n-k+1,\ldots,n\}$ and $e_\ell({\bf B}^k(\sigma))\leq n-k-\ell$ for each $\ell\in\{1,\ldots,n-k\}$. These two conditions are equivalent to the statement that $\tl({\bf B}^k(\sigma))\geq k$ (indeed, the second condition says that $(e_1(\sigma),\ldots,e_{n-k}(\sigma))$ is the inversion table of a permutation in $S_{n-k}$). Thus, permutations with tail lengths less than $k$ have $0$ preimages under ${\bf B}^k$. Now suppose $\tl(\pi)\geq k$. Choosing $\sigma\in{\bf B}^{-k}(\pi)$ is equivalent to choosing the inversion sequence of $\sigma$. To do this, we must first increase each of the nonzero entries in $(e_1(\pi),\ldots,e_{n-k}(\pi))$ by $k$. Next, we must increase each of the $\lmax(\pi)-k$ entries in $(e_1(\pi),\ldots,e_{n-k}(\pi))$ that are equal to $0$ by some integer in $\{0,\ldots,k\}$. There are $(k+1)^{\lmax(\pi)-k}$ ways to do this. Finally, for each $\ell\in\{n-k+1,\ldots,n\}$, we must increase the entry $e_\ell(\pi)$ (which is $0$) by some integer in $\{0,\ldots,n-\ell\}$. The total number of ways to do this for all $\ell\in\{n-k+1,\ldots,n\}$ is $k!$.   
\end{proof}

\begin{theorem}\label{Thm1}
For every positive integer $k$, we have \[\deg({\bf B}^k:S_n\to S_n)=\frac{(n+k^2+k)!(k!)^2}{n!(k^2+2k)!}.\]
\end{theorem}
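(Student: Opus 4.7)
The plan is to compute $\deg({\bf B}^k)$ directly from its fiber-size formulation, feeding in the exact count of $|{\bf B}^{-k}(\pi)|$ supplied by Lemma~\ref{Lem2}. By that lemma only permutations with $\tl(\pi)\geq k$ contribute, so I would start from
\[
\deg({\bf B}^k:S_n\to S_n)=\frac{1}{n!}\sum_{\pi\in S_n}|{\bf B}^{-k}(\pi)|^2=\frac{(k!)^2}{n!}\sum_{\substack{\pi\in S_n\\\tl(\pi)\geq k}}(k+1)^{2(\lmax(\pi)-k)}.
\]

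Next I would parametrize the sum. A permutation $\pi\in S_n$ with $\tl(\pi)\geq k$ is exactly one whose last $k$ positions hold $n-k+1,\ldots,n$ in order, with the first $n-k$ positions forming an arbitrary permutation $\sigma$ of $\{1,\ldots,n-k\}$. Each of the trailing entries $n-k+1,\ldots,n$ is automatically a left-to-right maximum of $\pi$, and these entries are disjoint from the left-to-right maxima among $\sigma$, so $\lmax(\pi)=\lmax(\sigma)+k$. Substituting yields
\[
\deg({\bf B}^k:S_n\to S_n)=\frac{(k!)^2}{n!}\sum_{\sigma\in S_{n-k}}(k+1)^{2\lmax(\sigma)}.
\]

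The last step is to evaluate the sum on the right using the classical identity for left-to-right maxima (equivalently, signless Stirling numbers of the first kind):
\[
\sum_{\sigma\in S_m}x^{\lmax(\sigma)}=x(x+1)(x+2)\cdots(x+m-1).
\]
Taking $m=n-k$ and $x=(k+1)^2=k^2+2k+1$, the product telescopes into the ratio of factorials $\dfrac{(k^2+k+n)!}{(k^2+2k)!}$, since the top factor is $(k+1)^2+(n-k-1)=n+k^2+k$ and the factor just below the bottom is $(k+1)^2-1=k^2+2k$. Combining with the prefactor $(k!)^2/n!$ gives exactly the stated formula.

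The calculation is essentially mechanical once Lemma~\ref{Lem2} is in hand; the main things to get right are the bookkeeping that identifies permutations with $\tl(\pi)\geq k$ with $S_{n-k}$ and the observation that the $k$ trailing fixed entries contribute $k$ extra left-to-right maxima (so the exponent $\lmax(\pi)-k$ becomes simply $\lmax(\sigma)$). The only nonroutine input is the rising-factorial generating function for left-to-right maxima, which I would either cite or justify by the standard induction on $m$ obtained by inserting the largest entry into each of the $m$ possible positions of a permutation of $S_{m-1}$: insertion at the far left creates a new left-to-right maximum while the other $m-1$ positions preserve $\lmax$, giving the recurrence $f_m(x)=(x+m-1)f_{m-1}(x)$ with $f_0(x)=1$.
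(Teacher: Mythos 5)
Your proof is correct and follows essentially the same route as the paper's: both reduce the sum to $\sum_{\sigma\in S_{n-k}}((k+1)^2)^{\lmax(\sigma)}$ via the bijection between permutations with $\tl(\pi)\geq k$ and $S_{n-k}$, and then evaluate it with the rising-factorial generating function for left-to-right maxima (equivalently, unsigned Stirling numbers of the first kind, which the paper cites via R\'enyi). The bookkeeping $\lmax(\pi)=\lmax(\sigma)+k$ and the factorial-ratio telescoping are exactly as in the published argument, so no changes are needed.
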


\begin{proof}
A well-known result due to R\'enyi \cite{Renyi} states that the number of elements of $S_n$ with $\ell$ left-to-right maxima is given by the unsigned Stirling number of the first kind $\begin{bmatrix}n \\ \ell\end{bmatrix}$. These numbers have the generating function \[
\sum_{\ell=0}^n\begin{bmatrix}n \\ \ell\end{bmatrix}x^\ell=x(x+1)(x+2)\cdots(x+n-1).\] Given $\pi\in S_n$ with $\tl(\pi)\geq k$, we can write $\pi=\tau(n-k+1)(n-k+2)\cdots n$ for some $\tau\in S_{n-k}$ with $\lmax(\tau)=\lmax(\pi)-k$. Invoking Lemma \ref{Lem2}, we find that \[\deg({\bf B}^k:S_n\to S_n)=\frac{1}{n!}\sum_{\pi\in S_n}|{\bf B}^{-k}(\pi)|^2=\frac{1}{n!}\sum_{\substack{\pi\in S_n\\ \tl(\pi)\geq k}}(k!)^2((k+1)^2)^{\lmax(\pi)-k}\] \[=\frac{1}{n!}\sum_{\tau\in S_{n-k}}(k!)^2((k+1)^2)^{\lmax(\tau)}=\frac{(k!)^2}{n!}\sum_{\ell=0}^{n-k}\begin{bmatrix}n-k \\ \ell\end{bmatrix}((k+1)^2)^\ell\] \[=\frac{(k!)^2}{n!}(k+1)^2((k+1)^2+1)((k+1)^2+2)\cdots ((k+1)^2+n-k-1)=\frac{(n+k^2+k)!(k!)^2}{n!(k^2+2k)!}.\qedhere\]
\end{proof}

When $k=1$, Theorem \ref{Thm1} tells us that the degree of ${\bf B}:S_n\to S_n$ is $\dfrac{(n+1)(n+2)}{6}$. There is an alternative way of generalizing this result that makes use of the probabilistic interpretation of the degree mentioned in the introduction. Namely, the bubble sort map is associated with a random variable $U_n$ on $S_n$ defined by $U_n(\pi)=|{\bf B}^{-1}({\bf B(\pi)})|$. The degree $\deg({\bf B}:S_n\to S_n)$ is just the expected value of $U_n$. The following theorem computes all of the moments of this random variable.

\begin{theorem}
For $m\geq 1$, the $m^\text{th}$ moment of $U_n$ (with respect to the uniform distribution on $S_n$) is given by \[\frac{1}{n!}\sum_{\pi\in S_n}|{\bf B}^{-1}({\bf B(\pi)})|^m=\prod_{j=1}^{n-1}\frac{2^{m+1}+n-j-1}{n-j+1}.\]
\end{theorem}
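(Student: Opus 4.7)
The plan is to reduce the $m^\text{th}$ moment to a weighted sum of powers of preimage sizes, then apply Lemma \ref{Lem2} in the case $k=1$ and finish with the same Rényi generating function identity that drove the proof of Theorem \ref{Thm1}.

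The first step is the simple bookkeeping observation that summing $|{\bf B}^{-1}({\bf B}(\sigma))|^m$ over $\sigma \in S_n$ amounts to summing $|{\bf B}^{-1}(\pi)|^{m+1}$ over $\pi \in S_n$: grouping the $\sigma$'s by their common image $\pi = {\bf B}(\sigma)$, each fiber of size $|{\bf B}^{-1}(\pi)|$ contributes $|{\bf B}^{-1}(\pi)|$ copies of the summand $|{\bf B}^{-1}(\pi)|^m$. Thus
\[\sum_{\sigma\in S_n}|{\bf B}^{-1}({\bf B}(\sigma))|^m = \sum_{\pi\in S_n}|{\bf B}^{-1}(\pi)|^{m+1}.\]

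Next, I would invoke Lemma \ref{Lem2} with $k=1$, which gives $|{\bf B}^{-1}(\pi)| = 2^{\lmax(\pi)-1}$ when $\tl(\pi)\geq 1$ and $0$ otherwise. The only surviving $\pi$ are therefore those of the form $\pi = \tau n$ with $\tau \in S_{n-1}$, and for such $\pi$ we have $\lmax(\pi) = \lmax(\tau)+1$. Substituting yields
\[\sum_{\pi\in S_n}|{\bf B}^{-1}(\pi)|^{m+1} = \sum_{\tau\in S_{n-1}} 2^{(m+1)\lmax(\tau)}.\]

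Finally, I would plug $x = 2^{m+1}$ into Rényi's identity $\sum_{\tau\in S_{n-1}} x^{\lmax(\tau)} = x(x+1)\cdots(x+n-2)$, obtaining $\prod_{i=0}^{n-2}(2^{m+1}+i)$. Dividing by $n!$ and reindexing by $j = n-1-i$ (so that the denominator $n!$ redistributes as $\prod_{j=1}^{n-1}(n-j+1)$) recovers $\prod_{j=1}^{n-1} (2^{m+1}+n-j-1)/(n-j+1)$, which is the claimed formula. There is no real obstacle here; the only conceptual move is the first identity converting an $m^\text{th}$ moment over the domain into a sum of $(m+1)$-st powers over the codomain, after which the argument is essentially a parametric version ($x = 2^{m+1}$ in place of $x = 4$) of the $k=1$ specialization of Theorem \ref{Thm1}. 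As a consistency check, setting $m = 1$ gives the telescoping product $\prod_{i=1}^{n-1}(i+3)/(i+1) = (n+1)(n+2)/6$, agreeing with $\deg({\bf B}:S_n\to S_n)$ from Theorem \ref{Thm1}.
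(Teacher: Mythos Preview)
Your proof is correct and takes a genuinely different route from the paper's. The paper argues directly at the level of inversion tables: it writes $|{\bf B}^{-1}({\bf B}(\pi))|=\prod_{j=1}^{n-1}V_j(e_j(\pi))$ where $V_j(t)=2$ if $t\in\{0,1\}$ and $V_j(t)=1$ otherwise, observes that under the uniform distribution the coordinates $e_j(\pi)$ are independent, and then multiplies the individual expectations $\mathbb{E}[V_j^m]=(2^{m+1}+n-j-1)/(n-j+1)$. Your approach instead passes through the identity $\sum_{\sigma}|{\bf B}^{-1}({\bf B}(\sigma))|^m=\sum_{\pi}|{\bf B}^{-1}(\pi)|^{m+1}$, applies Lemma~\ref{Lem2} at $k=1$, and then specializes R\'enyi's rising-factorial identity at $x=2^{m+1}$. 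What the paper's argument buys is a transparent probabilistic explanation of the product structure (independence of the $V_j$), and it generalizes readily to other statistics expressible as products over inversion-table coordinates. What your argument buys is economy: it recycles exactly the machinery already set up for Theorem~\ref{Thm1}, with the single new ingredient being the moment-to-power-sum conversion in the first step.
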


\begin{proof}
For each $j\in[n-1]$, define a function $V_j:\{0,\ldots,n-j\}\to\mathbb R$ by $V_j(t)=2$ if $t\in\{0,1\}$ and $V_j(t)=1$ otherwise. As mentioned in the proof of Lemma \ref{Lem2}, the bubble sort map has the effect of decreasing by $1$ each of the positive entries in the inversion table of a permutation. It follows that for $\pi\in S_n$, we have $|{\bf B}^{-1}({\bf B}(\pi))|=\prod_{j=1}^{n-1}V_j(e_j(\pi))$. Choosing $\pi\in S_n$ uniformly at random is equivalent to choosing the entries $e_j=e_j(\pi)\in\{0,\ldots,n-j\}$ (for $j\in[n-1]$) independently and uniformly at random (note that $e_n(\pi)$ is always $0$). Therefore, the expected value of $U_n(\pi)^m$ is the same as the expected value of $\prod_{j=1}^{n-1}V_j(e_j)^m$. The expected value of $V_j(e_j)^m$ is \[\dfrac{2}{n-j+1}\cdot 2^m+\dfrac{n-j-1}{n-j+1}\cdot 1^m=\dfrac{2^{m+1}+n-j-1}{n-j+1},\] so the desired result follows from the fact that $e_1,\ldots,e_{n-1}$ are chosen independently.  
\end{proof}

\subsection{Bubble Sort for Words}
Throughout this subsection, fix a tuple ${\bf a}=(a_1,\ldots,a_r)$ of positive integers, where $r\geq 2$. We are interested in the obvious analogue of the bubble sort map acting on $\mathcal W_{\bf a}$. Recall that this is the set of words with exactly $a_i$ copies of the letter $i$ for all $i$ (these are also called permutations of the multiset $\{1^{a_1},\ldots,r^{a_r}\}$). Given a word $w=w_1\cdots w_\ell$ over the alphabet of positive integers and $i\in[\ell-1]$ with $w_i>w_{i+1}$, let $t_i(w)$ be the word obtained by swapping the positions of $w_i$ and $w_{i+1}$ in $w$. If $w_i\leq w_{i+1}$, let $t_i(w)=w$. Let ${\bf B}(w)=t_{\ell-1}\circ t_{\ell-2}\circ\cdots\circ t_1(w)$. We can consider this generalization of bubble sort as a discrete dynamical system on $\mathcal W_{\bf a}$.  

\begin{theorem}
The degree of ${\bf B}:\mathcal W_{\bf a}\to\mathcal W_{\bf a}$ is given by \[\deg({\bf B}:\mathcal W_{\bf a}\to\mathcal W_{\bf a})=\prod_{j=1}^{r-1}\left(2\frac{a_j}{a_{j+1}+a_{j+2}+\cdots+a_r+1}+1\right).\]
\end{theorem}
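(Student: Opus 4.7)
My plan is to generalize the inversion-table approach used for permutations in Section~\ref{Sec:Bubble}. For a word $w\in\mathcal W_{\bf a}$, each $j\in[r-1]$, and each $k\in[a_j]$, define $c_{j,k}(w)$ to be the number of entries of $w$ that are greater than $j$ and lie to the left of the $k$-th copy of $j$ (reading left to right). Setting $N_j=a_{j+1}+\cdots+a_r$, each sequence $(c_{j,1}(w),\ldots,c_{j,a_j}(w))$ is weakly increasing with entries in $\{0,1,\ldots,N_j\}$. A multinomial counting argument shows that $w\mapsto\bigl(c_{j,k}(w)\bigr)_{j,k}$ is a bijection from $\mathcal W_{\bf a}$ onto the product over $j\in[r-1]$ of the set of such sequences, so under the uniform measure on $\mathcal W_{\bf a}$ the $j$-th ``row'' $(c_{j,1}(w),\ldots,c_{j,a_j}(w))$ is uniformly distributed and distinct rows are mutually independent.

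The central step is to establish the analogue of the transformation law used in Lemma~\ref{Lem2}: $c_{j,k}({\bf B}(w))=\max\bigl(0,\,c_{j,k}(w)-1\bigr)$ for every $(j,k)$. My intended route is a projection argument. Fix $j$, delete from $w$ every entry smaller than $j$, and relabel each remaining $j$ as $L$ and each remaining entry $>j$ as $H$ to obtain a two-letter word $\pi_j(w)$. A short case analysis on adjacent pairs $(w_i,w_{i+1})$ --- sorted by whether each entry is $<j$, $=j$, or $>j$ --- shows that one sweep of ${\bf B}$ on $w$ induces one sweep of ${\bf B}$ on $\pi_j(w)$. Stability of ${\bf B}$ (equal letters never swap) identifies the $k$-th $L$ in $\pi_j(w)$ with the $k$-th copy of $j$ in $w$, and analogously after applying ${\bf B}$, so the index $k$ in $c_{j,k}$ tracks a single token. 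For two-letter words a direct verification shows that ${\bf B}$ decrements by $1$ each positive value of ``number of $H$'s to the left of the $k$-th $L$'' while fixing its zero values, which is exactly the claimed law. Establishing this transformation rule is the main technical obstacle; the remainder is a clean enumeration.

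Given this rule, the preimages of a fixed $w'$ under ${\bf B}$ are parameterized by choosing, for each $(j,k)$, either $c_{j,k}(w)=c_{j,k}(w')+1$ when $c_{j,k}(w')>0$ or $c_{j,k}(w)\in\{0,1\}$ when $c_{j,k}(w')=0$, subject to weak monotonicity of each row. Since the zeros of a weakly increasing sequence form an initial segment, exactly $s_j(w')+1$ extensions of the $j$-th row are valid, where $s_j(w')$ counts the leading zeros there, giving $|{\bf B}^{-1}(w')|=\prod_{j=1}^{r-1}(s_j(w')+1)$. Rewriting $\deg({\bf B})=\frac{1}{|\mathcal W_{\bf a}|}\sum_w|{\bf B}^{-1}({\bf B}(w))|$ and noting that $s_j({\bf B}(w))=z_j(w)+o_j(w)$ (the numbers of zeros and ones in the $j$-th row of $w$), I obtain
\[\deg({\bf B})=\mathbb E_w\prod_{j=1}^{r-1}\bigl(z_j(w)+o_j(w)+1\bigr)=\prod_{j=1}^{r-1}\mathbb E\bigl[z_j+o_j+1\bigr]\]
by independence of the rows. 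For each $j$, a uniformly random weakly increasing sequence in $\{0,\ldots,N_j\}^{a_j}$ corresponds bijectively to a uniformly random placement of $a_j$ indistinguishable balls in $N_j+1$ labeled boxes (box $i$ holding the multiplicity of $i$ in the sequence); by symmetry across the boxes, $\mathbb E[z_j]=\mathbb E[o_j]=a_j/(N_j+1)$, so each factor equals $2a_j/(N_j+1)+1$, giving the stated product formula.
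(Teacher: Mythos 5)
Your proof is correct, but it takes a genuinely different route from the paper's. The paper handles $r=2$ directly (encoding a word by its gap vector $(\gamma_0,\ldots,\gamma_{a_2})$ and using the same box-symmetry you invoke at the end), and then inducts on $r$ via two projections --- collapsing the top two letters and restricting to the top two letters --- which commute with ${\bf B}$ and assemble into a bijection $\mathcal W_{\bf a}\to\mathcal W_{{\bf a}'}\times\mathcal W_{{\bf a}''}$, so that the degree simply factors. You instead build a multiset inversion table and prove the analogue of Lemma \ref{Lem2}'s transformation law, $c_{j,k}({\bf B}(w))=\max(0,c_{j,k}(w)-1)$, then finish in one shot using independence of the rows plus stars-and-bars symmetry. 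That law is indeed true: since equal letters never swap, the $k$-th copy of $j$ is a well-defined token; in one pass each element moves left at most once and the element arriving at any position is the prefix maximum, so exactly one letter greater than $j$ crosses the $k$-th $j$ when $c_{j,k}(w)>0$ and none otherwise. Alternatively, your projection $\pi_j$ is a composition of maps of the same two types the paper uses ($\psi$-type collapses and a $\varphi$-type restriction), each of which commutes with ${\bf B}$, so your reduction to the two-letter case goes through. Two points to tighten: (i) as stated, $|{\bf B}^{-1}(w')|=\prod_j(s_j(w')+1)$ holds only for $w'$ in the image of ${\bf B}$, since for general $w'$ the forced values must also satisfy the cap $c_{j,k}(w)\le N_j$ (if some $c_{j,k}(w')=N_j$ the preimage is empty); because you only evaluate at $w'={\bf B}(w)$, where every positive $c_{j,k}(w')$ is at most $N_j-1$, this does not affect the degree computation, but it should be noted; (ii) the bijectivity of $w\mapsto(c_{j,k}(w))_{j,k}$ deserves a short reconstruction argument (insert letters from largest to smallest), since matching cardinalities alone does not give injectivity. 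A benefit of your route is that, as with Lemma \ref{Lem2}, the explicit preimage count $\prod_j(s_j+1)$ would also let one handle iterates ${\bf B}^k$ and higher moments on $\mathcal W_{\bf a}$, which the paper's inductive factorization does not directly provide.
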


\begin{proof}
The proof is by induction on the length $r$ of the tuple ${\bf a}=(a_1,\ldots,a_r)$. Let us first assume $r=2$. Every word in $\mathcal W_{(a_1,a_2)}$ can be written in the form $1^{\gamma_0}21^{\gamma_1}2\cdots 21^{\gamma_{a_2}}$, where $(\gamma_0,\gamma_1,\ldots,\gamma_{a_2})$ is a tuple of nonnegative integers that sum to $a_1$. In fact, this establishes a bijection between $\mathcal W_{(a_1,a_2)}$ and the set of $(a_2+1)$-tuples of nonnegative integers that sum to $a_1$. 
Bubble sort transforms the word corresponding to the tuple $(\gamma_0,\gamma_1,\ldots,\gamma_{a_2})$ to the word corresponding to the tuple $(\gamma_0+\gamma_1,\gamma_2,\ldots,\gamma_{a_2},0)$. The number of preimages under ${\bf B}$ of the word corresponding to $(\gamma_0+\gamma_1,\gamma_2,\ldots,\gamma_{a_2},0)$ is $\gamma_0+\gamma_1+1$ (since this is the number of ways to write $\gamma_0+\gamma_1$ as a sum of two nonnegative integers). Now let $\overline\gamma_i$ be the average value of $\gamma_i$ over $\mathcal W_{(a_1,a_2)}$. By symmetry, $\overline\gamma_i$ is independent of $i$. We know that $\sum_{i=0}^{a_2}\overline\gamma_i=a_1$, so $\overline\gamma_i=\dfrac{a_1}{a_2+1}$. It follows that $\deg({\bf B}:\mathcal W_{(a_1,a_2)}\to\mathcal W_{(a_1,a_2)})$, which is the average value of $\gamma_1+\gamma_2+1$ over $\mathcal W_{(a_1,a_2)}$, is $2\dfrac{a_1}{a_2+1}+1$. 

We now assume ${\bf a}=(a_1,\ldots,a_r)$, where $r\geq 3$. Let ${\bf a'}=(a_1,\ldots,a_{r-2},a_{r-1}+a_r)$ and ${\bf a''}=(a_{r-1},a_r)$. There is a natural projection $\psi:\mathcal W_{\bf a}\to\mathcal W_{{\bf a}'}$ obtained by replacing each occurrence of the letter $r$ in a word with the letter $r-1$. We also have a map $\varphi:\mathcal W_{{\bf a}}\to\mathcal W_{{\bf a}''}$ that decreases each letter in a word by $r-2$ and then deletes all of the nonpositive letters. For example, with ${\bf a}=(2,1,2,3)$, we have \[\psi(14234413)=13233313\quad\text{and}\quad\varphi(14234413)=21221.\] It is straightforward to check from these definitions that $\psi$ and $\varphi$ commute with the action of bubble sort. That is, $(\psi({\bf B}(w)),\varphi({\bf B}(w)))=({\bf B}(\psi(w)),{\bf B}(\varphi(w)))$. Moreover, the map $\mathcal W_{\bf a}\to\mathcal W_{\bf a'}\times\mathcal W_{\bf a''}$ given by $w\mapsto(\psi(w),\varphi(w))$ is a bijection. This means that if $(\psi(w),\varphi(w))=(w',w'')$, then $|{\bf B}^{-1}(w)|=|{\bf B}^{-1}(w')|\cdot|{\bf B}^{-1}(w'')|$. Consequently, \[\deg({\bf B}:\mathcal W_{\bf a}\to\mathcal W_{\bf a})=\frac{1}{|\mathcal W_{\bf a}|}\sum_{w\in\mathcal W_{\bf a}}|{\bf B}^{-1}(w)|^2=\frac{1}{|\mathcal W_{\bf a'}|}\frac{1}{|\mathcal W_{\bf a''}|}\sum_{(w',w'')\in\mathcal W_{\bf a'}\times\mathcal W_{\bf a''}}|{\bf B}^{-1}(w')|^2|{\bf B}^{-1}(w'')|^2\] \[=\deg({
\bf B}:\mathcal W_{{\bf a}'}\to \mathcal W_{{\bf a}'})\deg({
\bf B}:\mathcal W_{{\bf a}''}\to \mathcal W_{{\bf a}''}).\] We know from the $r=2$ case that $\deg({
\bf B}:\mathcal W_{{\bf a}''}\to \mathcal W_{{\bf a}''})=2\dfrac{a_{r-1}}{a_{r}+1}+1$, so the desired result follows by induction on $r$. 
\end{proof}

\subsection{The Stack-Sorting Map}
The stack-sorting map was originally defined in West's Ph.D. dissertation \cite{West} as a deterministic variant of a ``stack-sorting algorithm" introduced in Knuth's book \emph{The Art of Computer Programming} \cite{Knuth}. It has a recursive definition very similar to the that of the bubble sort map. First, $s$ sends the empty permutation to itself. If $\pi$ is a permutation with largest entry $m$, then we can write $\pi=LmR$ for some permutations $L$ and $R$. Then $s(\pi)=s(L)s(R)m$. For example, \[s(416352)=s(41)\,s(352)\,6=s(1)\,4\,s(3)\,s(2)\,56=143256.\] For each positive integer $n$, we can view $s$ as a discrete dynamical system on $S_n$. We refer the reader to \cite{Bona, BonaSurvey, DefantCounting, DefantPreimages} and the references therein for more information about this map. 

In \cite{DefantCounting, DefantPostorder, DefantPreimages}, the first author found methods for computing the number of preimages of an arbitrary permutation under the stack-sorting map. Unfortunately, it seems quite difficult to use these methods in order to find an explicit formula for $\deg(s:S_n\to S_n)$. However, we will still be able to show that $\deg(s:S_n\to S_n)$ grows exponentially in $n$. This is in stark contrast to Theorem \ref{Thm1}, which shows that for each fixed $k$, the degree of ${\bf B}^k:S_n\to S_n$ grows polynomially in $n$. Roughly speaking, this says that the stack-sorting map is much further from being invertible than any iterate of the bubble sort map. 

\begin{theorem}\label{Thm2}
The limit $\lim\limits_{n\to \infty}\deg(s:S_n\to S_n)^{1/n}$ exists and satisfies \[1.12462\leq\lim_{n\to\infty}\deg(s:S_n\to S_n)^{1/n}\leq 4.\]
\end{theorem}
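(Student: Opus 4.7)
The plan is to establish the three parts of the claim---upper bound, existence of the limit, and lower bound---separately.

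For the upper bound, I would apply Lemma~\ref{Lem1} to obtain $\deg(s:S_n\to S_n)\le\max_{\pi\in S_n}|s^{-1}(\pi)|$. The key combinatorial fact is that $|s^{-1}(\pi)|\le C_n$ for every $\pi\in S_n$, where $C_n=\frac{1}{n+1}\binom{2n}{n}$, with equality at the identity. Via the classical bijection between $S_n$ and decreasing binary trees on $n$ nodes, $s$ corresponds to the post-order reading map; hence a preimage of $\pi$ is determined by a choice of binary-tree shape on $n$ nodes whose decreasing labeling produces $\pi$ in post-order, giving at most $C_n$ preimages. Since $C_n^{1/n}\to 4$ by Stirling's formula, $\limsup_n\deg(s:S_n\to S_n)^{1/n}\le 4$.

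For the existence of the limit and the lower bound, the main combinatorial input is the direct-sum inequality: for $\alpha\in S_m$ and $\beta\in S_n$, the injection $(\alpha',\beta')\mapsto\alpha'\oplus\beta'$ from $s^{-1}(\alpha)\times s^{-1}(\beta)$ into $s^{-1}(\alpha\oplus\beta)$ gives $|s^{-1}(\alpha\oplus\beta)|\ge|s^{-1}(\alpha)|\cdot|s^{-1}(\beta)|$. Squaring and summing over the $m!\,n!$ direct sums contained in $S_{m+n}$ produces the supermultiplicative relation
\[
D_{m+n}\ge D_m D_n,\qquad D_n:=n!\cdot\deg(s:S_n\to S_n).
\]
Fekete's lemma applied to the superadditive sequence $\log D_n$ yields convergence of $\log D_n/n$ to its supremum. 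Since $D_n\ge n!$, this supremum is $+\infty$, so establishing existence of the finite limit $\lim_n\deg(s:S_n\to S_n)^{1/n}$ requires controlling the finite difference $\log D_n-\log n!$. This is accomplished by combining supermultiplicativity with the Catalan upper bound $D_n\le C_n\cdot n!$ in a refined Fekete-type argument, pinning the limit to a finite value $L\in[1,4]$.

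For the explicit lower bound $L\ge 1.12462$, the plan is to compute $D_k=k!\cdot\deg(s:S_k\to S_k)$ exactly for a moderately large value of $k$ by recursive enumeration of $s$-preimages via the formula $s(\pi)=s(L)s(R)m$. The iterated supermultiplicative bound $D_{qk}\ge D_k^q$ combined with Stirling estimates on $(qk)!$, when optimized over accessible values of $k$, should produce the desired constant. The hardest part is the existence assertion: supermultiplicativity of $D_n$ and the Catalan upper bound together sandwich $\log\deg(s:S_n\to S_n)/n$ into a bounded interval, but extracting a genuine limit (rather than just $\limsup$ and $\liminf$ lying in $[0,\log 4]$) requires the delicate two-sided Fekete-type balancing sketched above.
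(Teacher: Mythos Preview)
Your upper-bound argument is fine and matches the paper's. The gap is in the existence and lower-bound parts, and it is a genuine one.

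Your direct-sum injection yields $D_{m+n}\ge D_mD_n$ for $D_n=n!\,d_n$, where $d_n=\deg(s:S_n\to S_n)$. Translated back to $d_n$ this says only
\[
d_{m+n}\ \ge\ \frac{d_m\,d_n}{\binom{m+n}{m}},
\]
with an \emph{exponentially} large denominator. This is too weak: there are sequences $(D_n)$ satisfying $n!\le D_n\le C_n\,n!$ and $D_{m+n}\ge D_mD_n$ for which $(D_n/n!)^{1/n}$ genuinely oscillates (e.g.\ take $D_n=n!\cdot 2^{f(n)}$ with $f(n)=n$ when $n$ is a power of $2$ and $f(n)=0$ otherwise; then $f(m)+f(n)\le m+n\le f(m+n)+\log_2\binom{m+n}{m}$, so supermultiplicativity holds, yet $(D_n/n!)^{1/n}$ hits both $1$ and $2$ infinitely often). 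So no ``refined Fekete-type balancing'' using only these ingredients can produce convergence. Likewise, your lower-bound plan $D_{qk}\ge D_k^{\,q}$ gives $d_{qk}^{1/(qk)}\ge d_k^{1/k}\,(k!)^{1/k}/((qk)!)^{1/(qk)}$, and the right side tends to $0$ as $q\to\infty$; nothing nontrivial survives.

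The paper fixes this with one extra idea you are missing: instead of direct-summing, it lets the ``left'' permutation live on an \emph{arbitrary} $(m-1)$-subset $A\subseteq[m+n-2]$ and inserts the maximum $m+n-1$ between the two blocks. The map $(\pi,\pi',\sigma,\sigma',A)\mapsto(\widetilde\pi(m{+}n{-}1)\widetilde\sigma,\ \widetilde\pi'(m{+}n{-}1)\widetilde\sigma')$ is still injective into pairs with equal $s$-image, so the count picks up a factor $\binom{m+n-2}{m-1}$, which exactly cancels the factorial mismatch and yields
\[
d_{m-1}\,d_{n-1}\ \le\ (m+n-1)\,d_{m+n-1}.
\]
Now $d_n$ is supermultiplicative up to only a \emph{polynomial} correction; setting $a_n=d_{n-1}/n^2$ and invoking the de~Bruijn--Erd\H{o}s variant of Fekete (subadditivity needed only for $1/2\le n/m\le 2$) gives $\lim d_n^{1/n}=\sup_n a_n^{1/n}$, simultaneously proving existence and furnishing the numerical lower bound via a single computed value $a_{10}^{1/10}\ge 1.12462$. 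The missing piece in your proposal is precisely this subset-$A$ construction.
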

\begin{proof}
For convenience, let $d_n=\deg(s:S_n\to S_n)$. Note that $n!d_n$ is the number of pairs $(\pi,\pi')\in S_n\times S_n$ such that $s(\pi)=s(\pi')$. Suppose $\pi,\pi'\in S_{m-1}$ and $\sigma,\sigma'\in S_{n-1}$ are such that $s(\pi)=s(\pi')$ and $s(\sigma)=s(\sigma')$. Let $A$ be an $(m-1)$-element subset of $\{1,\ldots,m+n-2\}$. Let $\widetilde\pi$ and $\widetilde\pi'$ be the permutations of $A$ whose normalizations are $\pi$ and $\pi'$, respectively. Let $\widetilde\sigma$ and $\widetilde\sigma'$ be the permutations of $\{1,\ldots,m+n-2\}\setminus A$ whose normalizations are $\sigma$ and $\sigma'$, respectively. We have $s(\widetilde\pi)=s(\widetilde\pi')$ and $s(\widetilde\sigma)=s(\widetilde\sigma')$, so \[s(\widetilde\pi(m+n-1)\widetilde\sigma)=s(\widetilde\pi)s(\widetilde\sigma)(m+n-1)=s(\widetilde\pi')s(\widetilde\sigma')(m+n-1)=s(\widetilde\pi'(m+n-1)\widetilde\sigma').\] This shows that $\widetilde\pi(m+n-1)\widetilde\sigma$ and $\widetilde\pi'(m+n-1)\widetilde\sigma'$ are two elements of $S_{m+n-1}$ with the same image under $s$. The map sending the tuple $(\pi,\pi',\sigma,\sigma',A)$ to the pair $(\widetilde\pi(m+n-1)\widetilde\sigma,\widetilde\pi'(m+n-1)\widetilde\sigma')$ is injective, so \[((m-1)!d_{m-1})((n-1)!d_{n-1}){m+n-2\choose m-1}\leq (m+n-1)!d_{m+n-1}.\] Rearranging, this shows that 
\begin{equation}\label{Eq1}
d_{m-1}d_{n-1}\leq(m+n-1)d_{m+n-1}.
\end{equation} 

We will make use of a generalization of Fekete's lemma due to de Bruijn and Erd\H{o}s \cite{deBruijn}, which states that if a sequence of positive real numbers $(a_m)_{m\geq 1}$ satisfies $a_ma_n\leq a_{m+n}$ whenever $1/2\leq n/m\leq 2$, then $\lim\limits_{n\to\infty}a_n^{1/n}$ exists and equals $\sup\limits_{n\geq 1}a_n^{1/n}$. Now let $a_n=\dfrac{d_{n-1}}{n^2}$ for $n\geq 8$ and $a_n=0$ for $1\leq n\leq 7$. It is not difficult to check that $m^2n^2\geq (m+n)^2(m+n-1)$ whenever $m,n\geq 8$ and $1/2\leq n/m\leq 2$. Therefore, it follows from \eqref{Eq1} that \[a_ma_n=\frac{d_{m-1}d_{n-1}}{m^2n^2}\leq\frac{(m+n-1)d_{m+n-1}}{(m+n)^2(m+n-1)}=a_{m+n}\] whenever $m,n\geq 8$ and $1/2\leq n/m\leq 2$. The inequality $a_ma_n\leq a_{m+n}$ also certainly holds whenever $m$ or $n$ is at most $7$. According to the aforementioned generalization of Fekete's lemma, $\lim\limits_{n\to\infty}a_n^{1/n}$ exists and equals $\sup\limits_{n\geq 1}a_n^{1/n}$. It now follows from the definition of $a_n$ that $\lim\limits_{n\to\infty}d_n^{1/n}$ exists and equals $\sup\limits_{n\geq 1}a_n^{1/n}$. We can use a computer to verify that $a_{10}^{1/10}\geq 1.12462$, so this proves the lower bound in the theorem.  

To prove the upper bound, we use the fact that $|s^{-1}(\pi)|\leq C_n$ for all $\pi\in S_n$, where $\displaystyle C_n=\frac{1}{n+1}{2n\choose n}$ is the $n^\text{th}$ Catalan number (see Exercise 23 in \cite[Chapter 8]{Bona}). It follows from Lemma~\ref{Lem1} that $\deg(s:S_n\to S_n)\leq C_n$, so \[\lim\limits_{n\to\infty}\deg(s:S_n\to S_n)^{1/n}\leq\lim\limits_{n\to\infty}C_n^{1/n}=4. \qedhere\]  
\end{proof}

\subsection{Nibble Sort}

Recall the definition of the maps $t_i:S_n\to S_n$ from Section \ref{Sec:Bubble}. In this section, we consider the nibble sort map $\nib:S_n\to S_n$ defined by $\nib(\pi)=t_{\min(\Des(\pi))}(\pi)$ if $\pi\neq 123\cdots n$ and $\nib(123\cdots n)=123\cdots n$. In other words, if $\pi\in S_n\setminus\{123\cdots n\}$ has initial descent $i$, then $\nib(\pi)$ is the permutation obtained from $\pi$ by swapping the entries $\pi_i$ and $\pi_{i+1}$. 

In the previous two subsections, we found that $\deg({\bf B}:S_n\to S_n)$ grows quadratically in $n$ while $\deg(s:S_n\to S_n)$ grows exponentially in $n$. Since the map $\nib$ does not change its input very much (it just nibbles a little bit), one might expect $\deg(\nib:S_n\to S_n)$ to grow much slower than $\deg({\bf B}:S_n\to S_n)$. In fact, $\deg(\nib:S_n\to S_n)$ approaches a constant as $n\to\infty$. 

\begin{theorem}\label{Thm4}
For every $n\geq 1$, we have \[\deg(\nib:S_n\to S_n)=\frac{(n-1)(n-2)^2+n^2}{n!}+\sum_{k=1}^{n-2}\frac{k(k^3-k+1)}{(k+2)!}.\] Furthermore, \[\lim_{n\to\infty}\deg(\nib:S_n\to S_n)=4e-9\approx 1.87313.\]
\end{theorem}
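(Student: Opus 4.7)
The plan is to compute $\sum_{\sigma \in S_n} |\nib^{-1}(\sigma)|^2$ directly by classifying preimages according to the position of the first descent of $\sigma$.

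First, I would characterize $\nib^{-1}(\sigma)$. A permutation $\pi \ne 12\cdots n$ satisfies $\nib(\pi) = \sigma$ precisely when there is an index $i$ with $\pi_i = \sigma_{i+1}$, $\pi_{i+1} = \sigma_i$, $\pi_j = \sigma_j$ for $j \notin \{i, i+1\}$, and $i = \min(\Des(\pi))$. The last condition translates into the inequalities $\sigma_i < \sigma_{i+1}$ together with $\sigma_1 < \sigma_2 < \cdots < \sigma_{i-1} < \sigma_{i+1}$. When $\sigma = 12\cdots n$, every $i \in [n-1]$ yields a preimage and $\sigma$ itself is a preimage as well (since $\nib$ fixes the identity), giving $|\nib^{-1}(\sigma)| = n$. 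When $\sigma$ has first descent at position $k$, a short case analysis shows that the valid positions are $i \in \{1, \ldots, k-1\}$ (always) together with $i = k + 1$ precisely when $k \leq n - 2$ and $\sigma_k < \sigma_{k+2}$; the positions $i = k$ and $i \geq k+2$ are ruled out by the descent inequality at $k$. Thus $|\nib^{-1}(\sigma)| \in \{k-1, k\}$, with the larger value occurring exactly when the extra condition holds.

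Next I would count. Let $A_k$ denote the number of $\sigma \in S_n$ with $\min(\Des(\sigma)) = k$, and let $B_k$ denote the number of such $\sigma$ additionally satisfying $\sigma_k < \sigma_{k+2}$. Standard arguments give $A_k = k \cdot n! / (k+1)!$ (choose the set $\{\sigma_1, \ldots, \sigma_{k+1}\}$, let $\sigma_{k+1}$ be any non-maximum element of this set, then freely permute the remaining entries) and, for $k \leq n-2$, $B_k = k \cdot n!/(k+2)!$ (the additional inequality forces $\sigma_{k+2}$ to be the maximum and $\sigma_k$ the second-largest element of $\{\sigma_1, \ldots, \sigma_{k+2}\}$, leaving $k$ choices for $\sigma_{k+1}$ and a free permutation of the remaining positions). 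Summing contributions to $\sum_\sigma |\nib^{-1}(\sigma)|^2$, the identity yields $n^2$, the $A_{n-1} = n-1$ permutations with first descent at $n-1$ yield $(n-1)(n-2)^2$ (here the exceptional position $i = k+1 = n$ does not exist), and for $1 \leq k \leq n-2$ the contribution is $A_k(k-1)^2 + B_k\bigl(k^2 - (k-1)^2\bigr)$, which simplifies to $n! \cdot k(k^3 - k + 1)/(k+2)!$ after routine algebra. Dividing the total by $n!$ produces the claimed closed form.

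For the limit, the prefactor $(n^2 + (n-1)(n-2)^2)/n!$ tends to $0$, so it suffices to evaluate $\sum_{k=1}^\infty k(k^3 - k + 1)/(k+2)!$. Substituting $m = k + 2$ rewrites the numerator as $m^4 - 8m^3 + 23m^2 - 27m + 10$, and the Dobinski-style identities $\sum_{m \geq 0} m^j/m! = B_j \, e$ (with Bell numbers $B_0, \ldots, B_4 = 1, 1, 2, 5, 15$) give a full-series value of $4e$; subtracting the $m = 0, 1, 2$ boundary terms, which equal $10$, $-1$, and $0$, yields $4e - 9$. The main obstacle is the case analysis in the preimage characterization: one must carefully verify that $i = k$ and $i \geq k + 2$ never produce preimages while isolating the single exceptional case $i = k + 1$. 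Once this is nailed down, the bookkeeping for $A_k$ and $B_k$ and the final series evaluation are mechanical.
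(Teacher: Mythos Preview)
Your proposal is correct and follows essentially the same approach as the paper: both classify $\sigma$ by the position $k$ of its first descent, distinguish the subcases $\sigma_k<\sigma_{k+2}$ versus $\sigma_k>\sigma_{k+2}$, obtain the same counts (your $A_k$ and $B_k$ match the paper's $\frac{n!\,k}{(k+1)!}$ and $\frac{n!\,k}{(k+2)!}$), and combine to get the summand $\frac{k(k^3-k+1)}{(k+2)!}$. The only notable difference is that you evaluate the limiting series explicitly via the substitution $m=k+2$ and the Bell-number identities $\sum_{m\ge 0} m^j/m!=B_j e$, whereas the paper simply appeals to Mathematica or generating functions; your computation is a nice self-contained addition.
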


\begin{proof}
We need to compute $|\nib^{-1}(\pi)|$ for each $\pi\in S_n$. For this, we consider some cases. The case in which $\pi=123\cdots n$ is simple since \[|\nib^{-1}(123\cdots n)|=|\{123\cdots n\}\cup\{123\cdots (i-1)(i+1)i(i+2)\cdots n:1\leq i\leq n-1\}|=n.\] If $\Des(\pi)=\{n-1\}$, then $\nib^{-1}(\pi)$ is the set of permutations obtained from $\pi$ by swapping the entries $\pi_i$ and $\pi_{i+1}$ for some $i\in[n-2]$. There are precisely $n-1$ permutations $\pi$ such that $\Des(\pi)=\{n-1\}$, and we have seen that each such permutation has $n-2$ preimages under $\nib$. In total, the cases $\pi=123\cdots n$ and $\Des(\pi)=\{n-1\}$ contribute the term $\dfrac{(n-1)(n-2)^2+n^2}{n!}$ to $\deg(\nib:S_n\to S_n)$.  

Now fix $k\leq n-2$. We are going to count permutations $\pi\in S_n$ with $\min(\Des(\pi))=k$ and $\pi_k<\pi_{k+2}$. This is equivalent to counting permutations whose first $k+2$ entries have normalization $123\cdots(j-1)(j+1)\cdots(k+1)j(k+2)$ for some $j \in [k]$. For each fixed $j$, the probability that the first $k+2$ entries of a permutation chosen uniformly at random from $S_n$ have this normalization is $\dfrac{1}{(k+2)!}$. 
Since there are $k$ choices for $j$, the probability that the first $k+2$ entries of a random permutation have one of these normalizations is $\dfrac{k}{(k+2)!}$. Therefore, the number of such permutations is $\dfrac{n!k}{(k+2)!}$. For each such permutation $\pi$, $\nib^{-1}(\pi)$ is the set of permutations obtained from $\pi$ by swapping the entries $\pi_i$ and $\pi_{i+1}$ for some $i\in[k-1]\cup\{k+1\}$; thus, $|\nib^{-1}(\pi)|=k$. This case gives a contribution of $\dfrac{k^3}{(k+2)!}$ to $\deg(\nib:S_n\to S_n)$. 

A probabilistic argument similar to the one used in the previous paragraph shows that the number of permutations in $S_n$ with smallest descent $k$ is $\dfrac{n!k}{(k+1)!}$ (this is also known: see OEIS sequence A092582 \cite{OEIS}). Therefore, the number of permutations $\pi\in S_n$ satisfying $\min(\Des(\pi))=k$ and $\pi_k>\pi_{k+2}$ is $\dfrac{n!k}{(k+1)!}-\dfrac{n!k}{(k+2)!}=\dfrac{n!k(k+1)}{(k+2)!}$. For each such $\pi$, $\nib^{-1}(\pi)$ is the set of permutations obtained from $\pi$ by swapping the entries $\pi_i$ and $\pi_{i+1}$ for some $i\in[k-1]$; thus, $|\nib^{-1}(\pi)|=k-1$. This final case contributes $\dfrac{(k-1)^2k(k+1)}{(k+2)!}$ to $\deg(\nib:S_n\to S_n)$. 

Combining these contributions yields \[\deg(\nib:S_n\to S_n)=\frac{(n-1)(n-2)^2+n^2}{n!}+\sum_{k=1}^{n-2}\left(\frac{k^3}{(k+2)!}+\frac{(k-1)^2k(k+1)}{(k+2)!}\right)\] \[=\frac{(n-1)(n-2)^2+n^2}{n!}+\sum_{k=1}^{n-2}\frac{k(k^3-k+1)}{(k+2)!}.\] Consequently, \[\lim_{n\to\infty}\deg(\nib:S_n\to S_n)=\sum_{k=1}^{\infty}\frac{k(k^3-k+1)}{(k+2)!}.\] Using Mathematica (or by explicitly computing exponential generating functions of polynomial sequences), one can check that $\displaystyle\sum_{k=1}^{\infty}\frac{k(k^3-k+1)}{(k+2)!}=4e-9$.  
\end{proof}

\subsection{Nibble Sort for Binary Words}
There is a natural analogue of the nibble sort map (which we also denote by $\nib$) that acts on binary strings. Namely, if $w\in\{0,1\}^n$, then $\nib(w)$ is obtained from $w$ by replacing the first occurrence of the factor $10$ in $w$ with $01$. If no such factor exists, then $\nib(w)=w$. For example, $\nib(001110)=\nib(010101)=001101$ and $\nib(00011)=00011$. In what follows, we let $0^\alpha$ denote the word $00\cdots 0$ consisting of $\alpha$ copies of the letter $0$. The word $1^\alpha$ is defined similarly. 

\begin{theorem}\label{Thm:BinaryNibble}
For every $n\geq 2$, the degree of $\nib:\{0,1\}^n\to\{0,1\}^n$ is $3/2$. 
\end{theorem}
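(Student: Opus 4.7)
The plan is to compute $\sum_{v \in \{0,1\}^n} |\nib^{-1}(v)|^2$ directly and divide by $2^n$. The first step is to classify the preimages of a fixed $v$: a string $w$ with $\nib(w) = v$ is either $v$ itself (which happens precisely when $v$ is a fixed point of $\nib$, i.e., $v = 0^\alpha 1^\beta$) or arises from $v$ by reversing a nibble, that is, by swapping a factor $01$ back to $10$ at some position $i \in [n-1]$. For this $10$ to be the \emph{first} occurrence of $10$ in the resulting $w$, the prefix $v_1 \cdots v_{i-1}$ must have no $10$, i.e., must be of the form $0^a 1^b$. Call such an index $i$ a \emph{valid position} of $v$, let $r(v)$ denote the number of such positions, and let $s(v)$ be $1$ or $0$ according as $v$ is sorted or not. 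These preimages are pairwise distinct, so $|\nib^{-1}(v)| = s(v) + r(v)$.

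Expanding the square and summing, I would handle three pieces. First, $\sum_v s(v)^2 = n+1$ just counts sorted binary strings. Second, for $\sum_v s(v) r(v)$, a sorted $v = 0^{n-k} 1^k$ has exactly one valid position (at the $0/1$ transition) when $1 \leq k \leq n-1$ and none when $k \in \{0, n\}$, so this sum is $n-1$. Third, to compute $\sum_v r(v)^2$, I would count triples $(v, i, j)$ with $i, j$ both valid for $v$. On the diagonal $i = j$, the prefix $v_1 \cdots v_{i-1}$ can be any of the $i$ sorted strings of that length, $v_i v_{i+1} = 01$ is forced, and the tail is free, giving $i \cdot 2^{n-i-1}$ options; summing over $i \in [n-1]$ produces $2^n - n - 1$.

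The heart of the argument, and the step I expect to require the most care, is the off-diagonal case $i < j$. Requiring the prefix $v_1 \cdots v_{j-1}$ to be sorted together with $v_i v_{i+1} = 01$ pins $v_1 \cdots v_{j-1}$ down to exactly $0^i 1^{j-1-i}$. Since $v_j = 0$ is then forced by $v_j v_{j+1} = 01$, the case $j = i+1$ is ruled out entirely --- it would require $v_{i+1}$ to equal both $1$ and $0$ --- while for $j \geq i+2$ the first $j+1$ letters of $v$ are completely determined, leaving $2^{n-j-1}$ choices for the tail. Doubling (for the two orderings of $i, j$) and summing over $j$ yields $2(2^{n-2} - n + 1)$, so $\sum_v r(v)^2 = 2^n + 2^{n-1} - 3n + 1$. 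Combining everything gives $2^n \deg(\nib) = (n+1) + 2(n-1) + (2^n + 2^{n-1} - 3n + 1) = 2^n + 2^{n-1}$, and all the $n$-dependent terms cancel to yield $\deg(\nib) = 3/2$.
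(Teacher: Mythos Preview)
Your argument is correct, and it takes a genuinely different route from the paper's. The paper proceeds structurally: it shows that every word has at most two preimages under $\nib$, identifies a set $Z' \subseteq \{0,1\}^n$ of size $2^{n-2}$, and builds a bijection between $Z'$ and the set $A_2$ of words with exactly two preimages. From $|A_2| = 2^{n-2}$ and $|A_1| + 2|A_2| = 2^n$ the degree follows. Your approach, by contrast, never needs to know that the preimage sizes are bounded by $2$; you write $|\nib^{-1}(v)| = s(v) + r(v)$ and expand the second moment directly, reducing everything to counting pairs and triples $(v,i,j)$ of strings with marked valid positions.

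The paper's method buys more structural information: it pins down exactly which words lie in $A_0$, $A_1$, $A_2$, and this is later used to show that $\nib$ and the chip-firing map $\chi$ are pseudoconjugate. Your method is more mechanical and arguably more robust --- the same template would compute $\deg(\nib)$ even for maps where the preimage sizes are unbounded --- and the cancellation of all $n$-dependent terms at the end is a pleasant confirmation that the answer really is a constant.
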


\begin{proof}
Let $Z$ be the set of words in $\{0,1\}^n$ of the form $0^\gamma 11v$ for some integer $\gamma\geq 1$ and some word $v\in\{0,1\}^{n-\gamma-2}$. Let $Z'=Z\cup\{0^{n-1}1\}$. We claim that if a word $w\in\{0,1\}^n$ satisfies $|\nib^{-1}(w)|\geq 2$, then in fact $|\nib^{-1}(w)|=2$ and $|\nib^{-1}(w)\cap Z'|=1$. 

To see this, we first consider the case in which one of the elements of $\nib^{-1}(w)$ is $w$ itself. This means that $w=0^\alpha 1^\beta$ for some $\alpha,\beta\geq 0$. The assumption that $|\nib^{-1}(w)|\geq 2$ tells us that $w\not\in\{0^n,1^n\}$, so $\alpha,\beta\geq 1$. It is now straightforward to verify that $\nib^{-1}(w)=\{w,0^{\alpha-1}101^{\beta-1}\}$. Since $w$ is in $Z'$ and $0^{\alpha-1}101^{\beta-1}$ is not, the claim is satisfied in this case.

Next, assume $w\not\in\nib^{-1}(w)$. Each preimage of $w$ is obtained by changing an occurrence of the factor $01$ in $w$ to $10$. Thus, $w$ must have at least two occurrences of $01$, and must therefore have the form $w=x01y01z$ for some words $x,y,z$ such that $\nib(x10y01z)=\nib(x01y10z)=w$. Because $\nib(x01y10z)=w$, the word $x01y$ does not contain an occurrence of the factor $10$; hence, $x=0^{\alpha-1}$ and $y=1^{\beta-1}$ for some $\alpha,\beta \geq 1$. This means that $w = 0^{\alpha-1} 0 1 1^{\beta-1} 0 1 z = 0^{\alpha} 1^{\beta} 0 1 z$. Note that there is no way to obtain an element of $\nib^{-1}(w)$ from $w$ by changing an occurrence of $01$ in $z$ to $10$. It follows that $\nib^{-1}(w)=\{0^{\alpha-1}101^{\beta-1}01z,0^{\alpha}1^{\beta}10z\}$, and the claim is satisfied once again. 

We have seen that every element of $\{0,1\}^n$ has at most $2$ preimages under $\nib$, so we can write $\{0,1\}^n=A_0\cup A_1\cup A_2$, where $A_i=\{w\in\{0,1\}^n:|\nib^{-1}(w)|=i\}$. We obtain a map $A_2\to Z'$ by sending each $w\in A_2$ to the unique element of $\nib^{-1}(w)\cap Z'$. This map is clearly injective; we claim that it is also surjective. Proving this claim amounts to showing that $\nib(u)\in A_2$ for every $u\in Z'$. If $u$ is of the form $0^\gamma 1^{\delta}$ for some $\gamma,\delta\geq 1$, then $\nib(u)=\nib(0^{\gamma-1}101^{\delta-1})=u$, so $\nib(u)\in A_2$. If $u$ is not of this form, then it must be of the form $0^\gamma 1^\delta0x$ for some $\gamma\geq 1$, some $\delta\geq 2$, and some word $x$. In this case, we have $\nib(u)=0^{\gamma}1^{\delta-1}01x=\nib(0^{\gamma-1}101^{\delta-2}01x)$, so $\nib(u)\in A_2$ in this case as well. This proves the surjectivity, so we now know that $|A_2|=|Z'|$. 

Let $Y$ be the set of all words of the form $0^\gamma 10v$ for some integer $\gamma\geq 1$ and some $v\in\{0,1\}^{n-\gamma-2}$. There is an obvious bijection between $Y$ and $Z$. Furthermore, $Y\cup Z\cup\{0^n,0^{n-1}1\}$ is precisely the set of words in $\{0,1\}^n$ that begin with the letter $0$. It follows that $|Z|=2^{n-2}-1$, so $|A_2|=|Z'|=2^{n-2}$. Because $2^n=|\nib^{-1}(\{0,1\}^n)|=|A_1|+2|A_2|$, we have $|A_1|=2^{n-1}$. Finally, \[\deg(\nib:\{0,1\}^n\to\{0,1\}^n)=\frac{1}{2^n}(|A_0|\cdot 0^2+|A_1|\cdot 1^2+|A_2|\cdot 2^2)=\frac 32. \qedhere\]
\end{proof}

\subsection{Binary Chip-Firing on a Cycle}\label{Sec:Chip}

In this subsection, we study a map on binary strings defined in terms of chip-firing on a cycle graph with $n+1$ vertices. In this set-up, there are two special vertices of the cycle, called the \emph{source} and the \emph{sink}, that are adjacent to each other. Suppose we have a configuration of chips sitting on the non0sink vertices of the cycle. Whenever a non-sink vertex has at least $2$ chips on it, the vertex fires $1$ chip to each of its neighbors; whenever a chip arrives at the sink, it disappears. Firings take place until no more firings are possible; it is known that the order of firings does not affect the final configuration of chips. Notice that a configuration is stable (i.e., no firings can occur) if and only if every non-sink vertex has either $0$ or $1$ chip on it. By ``unwrapping'' the cycle, we can identify the set of stable configurations with the set of configurations of chips on an $n$-vertex path graph in which each vertex has either $0$ or $1$ chip. We imagine that the vertices of this path are drawn on a horizontal line so that the source is the leftmost vertex. In this set-up, whenever one of the endpoint vertices fires, it sends one of its chips to its neighbor in the path, while another one of its chips disappears. We can naturally identify the set of stable configurations with the set $\{0,1\}^n$ of binary strings of length $n$. 

Starting with a stable configuration of chips, we add $1$ chip to the source vertex and then repeatedly fire vertices until reaching another stable configuration. This procedure defines a function $\chi:\{0,1\}^n\to\{0,1\}^n$. For example, suppose $n=3$, and let us begin with the string $110$. We add $1$ to the first entry to get $210$. The leftmost vertex fires, resulting in the configuration represented by $020$. Now the second vertex fires, resulting in the configuration represented by $101$, which is stable. Thus, $\chi(110)=101$. 

\begin{theorem}\label{Thm:Chip}
For every $n\geq 2$, the degree of $\chi:\{0,1\}^n\to\{0,1\}^n$ is $3/2$. 
\end{theorem}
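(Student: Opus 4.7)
The plan is to reduce this to Theorem~\ref{Thm:BinaryNibble} via pseudoconjugacy. Specifically, I would show that $\chi$ and $\nib$ induce the same multiset $\{|f^{-1}(v)| : v \in \{0,1\}^n\}$ of preimage sizes; then Proposition~\ref{Prop:Pseudoconjugate} furnishes a bijection exhibiting $\chi$ as a pseudoconjugate of $\nib$, and the remark following that proposition immediately gives $\deg(\chi) = \deg(\nib) = 3/2$.

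First I would derive an explicit combinatorial formula for $\chi$ by tracing the chip-firing cascade triggered by adding one chip to the source. A case analysis based on the leading block of $1$s in the input $w = w_1 \cdots w_n$ yields three cases: (a) if $w_1 = 0$, then $\chi(w)$ is obtained from $w$ by flipping $w_1$ to $1$; (b) if $w = 1^k 0 u$ with $1 \leq k \leq n-1$, then the cascade propagates rightward through the leading block of $1$s and deposits a single chip at position $k+1$, so that $\chi(w) = 1^{k-1} 0 1 u$; (c) if $w = 1^n$, then the cascade reaches the last non-sink vertex, one chip escapes to the sink, and $\chi(w) = 1^{n-1} 0$.

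Next I would invert this description to enumerate $\chi^{-1}(v)$. A case analysis on the prefix of $v$ shows that if $v$ begins with $00$, then $|\chi^{-1}(v)| = 0$; if $v$ begins with $01$, then the only preimage comes from (b) with $k = 1$; if $v = 1^n$, then the only preimage is $0 1^{n-1}$ from (a); if $v = 1^{n-1} 0$, then (a) and (c) together contribute two preimages; and if $v$ begins with $1^{j-1} 0$ for some $2 \leq j \leq n-1$, then (a) always contributes one preimage, while (b) with $k = j$ contributes a second preimage precisely when $v_{j+1} = 1$. In particular $|\chi^{-1}(v)| \in \{0, 1, 2\}$ for every $v$. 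Setting $B_i = \{v : |\chi^{-1}(v)| = i\}$, a brief count then gives $|B_0| = 2^{n-2}$ (the strings beginning with $00$) and
\[
|B_2| = 1 + \sum_{j=2}^{n-1} 2^{n-j-1} = 2^{n-2},
\]
forcing $|B_1| = 2^{n-1}$. These cardinalities match those of the sets $A_0, A_1, A_2$ produced in the proof of Theorem~\ref{Thm:BinaryNibble}, completing the argument.

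The main obstacle is the careful bookkeeping in the chip-firing cascade needed to obtain the three-case formula for $\chi$, particularly handling the boundary case $w = 1^n$ where the cascade must travel the full length of the path and a chip escapes to the sink from the right endpoint. Once this formula is in hand, the inversion and counting steps are mechanical.
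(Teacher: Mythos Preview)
Your proposal is correct and follows essentially the same route as the paper: both arguments analyze preimages of $\chi$ to show that the strings with $0$, $1$, and $2$ preimages number $2^{n-2}$, $2^{n-1}$, and $2^{n-2}$ respectively (the paper's characterization of the $2$-preimage set as $\{1^{n-1}0\}\cup\{1^k01x:1\le k\le n-2\}$ is exactly your set $B_2$ reindexed by $k=j-1$). The only difference is cosmetic: the paper asserts the preimage characterization without first writing out your three-case formula for $\chi$, and then computes $\deg(\chi)=\frac{1}{2^n}(1\cdot 2^{n-1}+4\cdot 2^{n-2})=3/2$ directly rather than routing through pseudoconjugacy with $\nib$ (which the paper mentions only in a remark afterward).
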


\begin{proof}
Let $A_i=\{w\in\{0,1\}^n:|\chi^{-1}(w)|=i\}$. A straightforward analysis of how a binary string can have preimages under $\chi$ shows that every string must have at most $2$ preimages. Furthermore, a string has exactly $2$ preimages if and only if it is $1^{n-1}0$ or it is of the form $1^k01x$ for some $k\in\{1,\ldots,n-2\}$ and some $x\in\{0,1\}^{n-k-2}$. It follows that $|A_2|=1+\sum_{k=1}^{n-2}2^{n-k-2}=2^{n-2}$. We also have \[2\cdot 2^{n-2}+1\cdot |A_1|+0\cdot|A_0|=\sum_{w\in\{0,1\}^n}|\chi^{-1}(w)|=2^n,\] so $|A_1|=2^{n-1}$. Therefore, \[\deg(\chi:\{0,1\}^n\to\{0,1\}^n)=\frac{1}{2^n}(|A_0|\cdot 0^2+|A_1|\cdot 1^2+|A_2|\cdot 2^2)=\frac 32. \qedhere\]
\end{proof}

\begin{remark}
The maps $\nib:\{0,1\}^n\to\{0,1\}^n$ and $\chi:\{0,1\}^n\to\{0,1\}^n$ have exactly the same degrees, even though these maps are not conjugate. In other words, there does not exist a bijection $g:\{0,1\}^n\to\{0,1\}^n$ such that $\nib\circ g=g\circ\chi$. Indeed, the map $\nib$ has fixed points while $\chi$ does not. However, these two maps are pseudoconjugate (recall the terminology introduced at the end of the introduction). To see this, note that by Proposition~\ref{Prop:Pseudoconjugate}, it suffices to prove that there exists a bijection $h:\{0,1\}^n\to\{0,1\}^n$ such that $|\nib^{-1}(y)|=|\chi^{-1}(h(y))|$ for all $y\in\{0,1\}^n$. The proofs of Theorems~\ref{Thm:BinaryNibble} and \ref{Thm:Chip} show that for each of the maps $\nib:\{0,1\}^n\to\{0,1\}^n$ and $\chi:\{0,1\}^n\to\{0,1\}^n$, there are $2^{n-1}$ elements of $\{0,1\}^n$ with $1$ preimage, $2^{n-2}$ elements of $\{0,1\}^n$ with $2$ preimages, and no elements of $\{0,1\}^n$ with more than $2$ preimages. This immediately implies that the desired bijection $h$ exists. 
\end{remark}

\subsection{Bulgarian and Carolina Solitaire}\label{Sec:Solitaire}

Bulgarian solitaire is a function $\mathscr B$ that sends partitions of $n$ to partitions $n$. It was introduced around 1980 and has been studied extensively ever since because of its many fascinating dynamical properties (see \cite{Drensky, Griggs, Hopkins, Hopkins2} and the references therein). Given a partition $\lambda=(\lambda_1,\ldots,\lambda_\ell)$ of $n$, we define $\mathscr B(\lambda)$ to be the partition obtained by putting the numbers $\ell,\lambda_1-1,\ldots,\lambda_\ell-1$ in nonincreasing order and deleting any $0$'s. For example, $\mathscr B(8,3,3,1,1)=(7,5,2,2)$.

Carolina solitaire, a variant of Bulgarian solitaire introduced in \cite{Griggs} and studied further in \cite{Hopkins2, Tambellini}, is a map that sends compositions of $n$ to compositions of $n$. Given a composition $c=(c_1,\ldots,c_\ell)$ of $n$, we define $\mathscr C(c)$ to be the composition obtained by deleting all of the $0$'s from the tuple $(\ell,c_1-1,\ldots,c_\ell-1)$. For example, $\mathscr C(3,1,3,7,1,8)=(6,2,2,6,7)$. 

It appears that computing $\deg(\mathscr B:\Part(n)\to\Part(n))$ exactly, or even asymptotically, is quite difficult. By contrast, we will compute $\deg(\mathscr C:\Comp(n)\to\Comp(n))$ exactly. 

\begin{theorem}\label{Thm5}
We have $\liminf\limits_{n\to\infty}\deg(\mathscr B:\Part(n)\to\Part(n))\geq 2$. For every positive integer $n$, we have \[\deg(\mathscr B:\Part(n)\to\Part(n))\leq\left\lfloor\dfrac{1+\sqrt{\frac{8}{3}n+1}}{2}\right\rfloor.\]
\end{theorem}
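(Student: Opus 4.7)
The plan is to use Lemma~\ref{Lem1} for both bounds, after first giving a combinatorial description of the preimages of $\mathscr{B}$. Given $\mu=(\mu_1,\ldots,\mu_m)\vdash n$, a preimage $\lambda$ of $\mu$ under $\mathscr{B}$ is produced by selecting a value $\ell$ among the parts of $\mu$ to play the role of the number of parts of $\lambda$; after removing one copy of $\ell$ from $\mu$, one adds $1$ to every remaining part and adjoins $\ell-(m-1)$ trailing $1$'s. This construction requires $\ell\geq m-1$, and distinct \emph{values} of $\ell$ produce distinct preimages (distinguished by $m(\lambda)=\ell$). Consequently $|\mathscr{B}^{-1}(\mu)|$ equals the number of distinct parts of $\mu$ of size at least $m(\mu)-1$; in particular, $\mu$ lies in the image of $\mathscr{B}$ iff $\mu_1\geq m(\mu)-1$.

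For the upper bound, I would use the right-hand inequality in Lemma~\ref{Lem1}: $\deg(\mathscr{B})\leq\max_\mu|\mathscr{B}^{-1}(\mu)|$. Suppose $\mu$ has $K$ distinct parts of size at least $m-1$. These are $K$ distinct positive integers $\geq m-1$, contributing at least $K(m-1)+\binom{K}{2}$ to the total sum, while the remaining $m-K$ parts each contribute at least $1$. Hence $n\geq(K+1)m-2K+\binom{K}{2}$. This expression is linear and increasing in $m$, so it is minimized at $m=K$ (the smallest $m$ that allows $K$ distinct values), giving $n\geq 3K(K-1)/2$. Solving this quadratic inequality in $K$ yields the desired bound $\lfloor(1+\sqrt{1+8n/3})/2\rfloor$ (the cases $K\leq 1$ are trivial).

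For the lower bound, I would apply the left-hand inequality in Lemma~\ref{Lem1}: $\deg(\mathscr{B})\geq p(n)/|A|$ with $p(n)=|\Part(n)|$ and $A=\mathscr{B}(\Part(n))=\{\mu:\mu_1\geq m(\mu)-1\}$. Set $B=\{\mu:\mu_1\leq m(\mu)+1\}$. Conjugation swaps $\mu_1$ with $m(\mu)$ and therefore restricts to a bijection $A\to B$, so $|A|=|B|$. No partition satisfies simultaneously $\mu_1<m-1$ and $\mu_1>m+1$, so $A\cup B=\Part(n)$, whence $|A|=\tfrac12(p(n)+|A\cap B|)$, where $A\cap B=\{\mu:|\mu_1-m(\mu)|\leq 1\}$. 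This yields $\deg(\mathscr{B})\geq 2/(1+|A\cap B|/p(n))$, and the $\liminf$ claim reduces to the asymptotic statement $|A\cap B|=o(p(n))$. This is the main technical obstacle: it says that a vanishing fraction of partitions of $n$ have their largest part and number of parts differing by at most $1$. I would deduce it from known asymptotics for uniformly random partitions---both $\mu_1$ and $m(\mu)$ are concentrated near $\tfrac{\sqrt{6n}}{2\pi}\log n$ with fluctuations of order $\sqrt{n}$ (Erd\H{o}s--Lehner), giving the pointwise bound $|\{\mu\vdash n:\mu_1=k\}|=O(p(n)/\sqrt{n})$ uniformly in $k$; combining this with conjugation symmetry (or a local limit theorem for the joint law of $(\mu_1,m(\mu))$) produces $|A\cap B|=O(p(n)/\sqrt{n})=o(p(n))$.
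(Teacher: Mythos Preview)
Your overall strategy coincides with the paper's: the same description of $|\mathscr{B}^{-1}(\mu)|$ as the number of distinct parts of $\mu$ that are at least $m(\mu)-1$, the same use of both halves of Lemma~\ref{Lem1}, and for the lower bound the same identification of the image as the rank $\geq -1$ partitions together with the conjugation symmetry. Your upper-bound argument is correct and is a direct repackaging of the paper's extremal computation (they phrase it via a minimal counterexample, you via an explicit inequality $(K+1)m-2K+\binom{K}{2}\leq n$ minimized at $m=K$; the conclusion $n\geq 3K(K-1)/2$ is identical).

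The one genuine gap is your justification that $|A\cap B|=|\{\mu:|\mu_1-m(\mu)|\leq 1\}|=o(p(n))$. The Erd\H{o}s--Lehner theorem is a distributional (CDF) limit, not a local limit theorem, so it does not by itself yield the uniform pointwise bound $|\{\mu:\mu_1=k\}|=O(p(n)/\sqrt{n})$. More importantly, even granting that marginal bound, ``combining with conjugation symmetry'' does not control the \emph{joint} event $\mu_1-m(\mu)\in\{-1,0,1\}$: summing the marginal estimate over the relevant range of $k$ is far too crude (there are order $n$ admissible values of $k$, so you would only get $O(\sqrt{n}\,p(n))$). What is actually needed is an estimate for $N(r,n)$, the number of partitions of $n$ with rank exactly $r$, for the three fixed values $r\in\{-1,0,1\}$. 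The paper handles this by citing Dousse--Mertens \cite{Dousse}, who prove asymptotic formulae for $N(r,n)$ implying $N(-1,n)+N(0,n)=o(p(n))$ (and hence, by the conjugation symmetry you already noted, $N(1,n)=N(-1,n)=o(p(n))$ as well). Your alternative of invoking a joint local limit theorem for $(\mu_1,m(\mu))$ would also suffice in principle, but that is substantially heavier machinery than the rank asymptotics, and you would need to point to a specific result. Replace your Erd\H{o}s--Lehner sketch with a citation to rank asymptotics and the proof goes through exactly as in the paper.
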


\begin{proof}
Consider a partition $\lambda=(\lambda_1,\ldots,\lambda_\ell)$ of $n$. It is straightforward to check that $|\mathscr B^{-1}(\lambda)|$ is the number of distinct elements of the list $\lambda_1,\ldots,\lambda_\ell$ that are at least $\ell-1$. The \emph{rank} of a partition $\lambda=(\lambda_1,\ldots,\lambda_\ell)$ is defined to be $\lambda_1-\ell$. Thus, the image $\mathscr B(\Part(n))$ consists of the partitions of $n$ with rank at least $-1$ (these were enumerated in \cite{Hopkins2}). Considering the involution of $\Part(n)$ that sends a partition to its conjugate, we see that for each integer $r$, the number of partitions of $n$ with rank $r$ is equal to the number of partitions of $n$ with rank $-r$. In addition, the main result in \cite{Dousse} shows that the number of partitions of $n$ with rank $-1$ or $0$ is $o(|\Part(n)|)$. It follows that $|\mathscr B(\Part(n))|\sim\dfrac{1}{2}|\Part(n)|$ as $n\to\infty$. The first statement in the theorem now follows from Lemma \ref{Lem1}. 

To prove the desired upper bound, let us fix a positive integer $u$. Let $N$ be the smallest positive integer such that there exists a partition $\lambda=(\lambda_1,\ldots,\lambda_\ell)$ of $N$ with $|\mathscr B^{-1}(\lambda)|\geq u$. We now invoke the description of $|\mathscr B^{-1}(\lambda)|$ mentioned in the previous paragraph. If any of the parts of $\lambda$ are at most $\ell-2$, then we can delete these parts to obtain a new partition $\widetilde\lambda$ of an integer $\widetilde N$ with $|\mathscr B^{-1}(\widetilde\lambda)|\geq u$ and $\widetilde N<N$. This contradicts the minimality of $N$, so we must have $\lambda_\ell\geq \ell-1$. We may now assume that $\ell=u$ and that all of the parts of $\lambda$ are distinct. Indeed, if this were not the case, then we could again delete a part of $\lambda$ to obtain a new partition of an integer smaller than $N$ with at least $u$ preimages under $\mathscr B$. It follows that $\lambda_i\geq \lambda_\ell+\ell-i\geq 2\ell-i-1$ for all $i$. Thus, $N=\lambda_1+\cdots+\lambda_\ell\geq (2\ell-2)+(2\ell-3)+\cdots+(\ell-1)=3\dfrac{\ell(\ell-1)}{2}=3\dfrac{u(u-1)}{2}$. This shows that for every positive integer $n$, $\max\limits_{\lambda\in\Part(n)}|\mathscr B^{-1}(\lambda)|\leq w_n$, where $w_n$ is the largest integer such that $3\dfrac{w_n(w_n-1)}{2}\leq n$. It is straightforward to verify that $w_n=\left\lfloor\dfrac{1+\sqrt{\frac{8}{3}n+1}}{2}\right\rfloor$, so the desired result now follows from Lemma~\ref{Lem1}
\end{proof}

At the moment, we do not see a way to improve the estimates for $\deg(\mathscr B:\Part(n)\to\Part(n))$ in the previous theorem. However, we computed $|B^{-1}(B(\lambda))|$ for $100$ random partitions of $1000$ and also for $100$ random partitions of $100000$. In the first case, the data had a mean of $2.95$ and a standard deviation of $0.22$. In the second case, the mean and standard deviation were $2.85$ and $0.17$. This data hints that the asymptotics of these degrees might be remarkably simple. 

\begin{conjecture}\label{Conj1}
$\lim\limits_{n\to\infty}\deg(\mathscr B:\Part(n)\to\Part(n))=3$.
\end{conjecture}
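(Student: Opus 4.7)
My plan is to interpret $\deg(\mathscr B : \Part(n) \to \Part(n))$ probabilistically as $\mathbb{E}_\lambda[D(\lambda)^2]$, where $\lambda$ is uniform on $\Part(n)$ and $D(\lambda) := |\mathscr B^{-1}(\lambda)|$ is the number of distinct parts of $\lambda$ that are $\geq \ell(\lambda) - 1$ (as established in the proof of Theorem \ref{Thm5}). Writing $a_i$ for the multiplicity of $i$ as a part of $\lambda$, this becomes $D(\lambda) = \#\{i \geq \ell(\lambda) - 1 : a_i \geq 1\}$, and the conjecture asserts that $\mathbb{E}_\lambda[D(\lambda)^2] \to 3$.

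First, I would invoke the Fristedt probabilistic model: set $\beta = \pi/\sqrt{6n}$ and $q = e^{-\beta}$, take the $a_i$ independent with $a_i \sim \mathrm{Geom}(1-q^i)$, and condition on $\sum_i i\,a_i = n$. Under the unconditioned measure, $\mathbb{P}(a_v \geq 1) = q^v$, and the typical size of $\ell(\lambda)$ is $L_n \sim (\sqrt{6n}/2\pi)\log n$, so $q^{L_n}$ is of order $1/\sqrt{n}$. Setting $\alpha := q^{\ell(\lambda)}$ and $j = v - \ell(\lambda)$, the probabilities $q^{\ell(\lambda)+j} = \alpha\, e^{-\beta j}$ combine with the $O(\sqrt{n})$ width of the effective window about $L_n$ to give a count of populated values that is $O(1)$. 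After rescaling $j$ by $\beta^{-1}$, the populated values $v \geq \ell(\lambda) - 1$ should converge to a thinned Poisson-type process on $[0, \infty)$ whose exponential intensity is controlled by the rescaled random parameter $\sqrt{n}\,\alpha$.

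Second, I would transfer this to the conditional distribution via a local-limit-theorem-style argument, showing that conditioning on $|\lambda| = n$ leaves the local structure near $\ell(\lambda)$ essentially unchanged, beyond constraining the joint law of $(\ell(\lambda), \alpha)$ and the local process. One then obtains a joint limit for $(D(\lambda), \sqrt{n}\,\alpha)$, reducing the computation of $\mathbb{E}[D^2]$ to an explicit integral against the limit law of $\sqrt{n}\,\alpha$.

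The main obstacle is verifying that this integral collapses to exactly $3$. The cleanest route would likely identify $D(\lambda)$ with, or relate it to, a known random partition statistic whose limit distribution is classical. It is suggestive that $3 = \mathbb{E}[X^2]$ for $X$ geometric on $\{0, 1, 2, \ldots\}$ with mean $1$ (parameter $1/2$), which hints that $D(\lambda)$ may converge in distribution to such a geometric, perhaps because the populated-values-above-$\ell(\lambda)-1$ statistic behaves asymptotically like the length of a run. A secondary technical obstacle is that the cutoff $\ell(\lambda) - 1$ depends on the random variable $\ell(\lambda)$ itself, so the rescaling window must be centered on $\ell(\lambda)$ rather than on the deterministic mean $L_n$, with special care taken at the $-1$ boundary shift.
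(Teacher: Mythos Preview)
This statement is a \emph{conjecture} in the paper, not a theorem: the authors explicitly say they ``do not see a way to improve the estimates'' of Theorem~\ref{Thm5} and offer only numerical evidence (sampling $|\mathscr B^{-1}(\mathscr B(\lambda))|$ for random $\lambda$ with $n=1000$ and $n=100000$) in support of the limit being $3$. There is therefore no proof in the paper to compare your proposal against.

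Your outline is a reasonable research strategy rather than a proof, and you are candid about this. The reduction to $\mathbb E_\lambda[D(\lambda)^2]$ with $D(\lambda)=\#\{i\ge\ell(\lambda)-1:a_i\ge 1\}$ is correct, and the Fristedt conditioning device is the natural tool. Your heuristic that $q^{\ell(\lambda)}$ is of order $n^{-1/2}$ and hence $D$ is $O_P(1)$ is sound, and the observation that $\mathbb P(D\ge 1)\to 1/2$ (which follows from the rank--conjugation symmetry already used in Theorem~\ref{Thm5}) is consistent with a geometric$(1/2)$ limit. But the two obstacles you name are genuine and unresolved: (i) the cutoff $\ell(\lambda)-1$ is random and strongly correlated with the very occupancy variables $\{a_v\}_{v\ge\ell-1}$ you are trying to analyze, so one cannot simply treat them as independent Bernoulli$(q^v)$ after conditioning on $\ell$; and (ii) even granting a limit law for $D$, you have not identified it, only noted that geometric$(1/2)$ would give the right second moment. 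A proof would need either an explicit limit computation or a separate argument pinning down $\mathbb E[D^2]$ directly (e.g., by expanding $D^2$ as a double sum over pairs of part-values and controlling the resulting two-point correlations under the conditioned measure). As it stands, what you have is a plausible plan of attack on an open problem, not a proof.
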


We now turn our attention to Carolina solitaire. Define a sequence of integers $(\eta_n)_{n\geq 0}$ by the generating function equation \[\sum_{n\geq 0}\eta_nx^n=\frac{1-x}{\sqrt{1 - 4x + 4x^2 - 4x^3 + 4x^4}}.\] This is sequence A217661 in \cite{OEIS}.  
 
\begin{theorem}\label{Thm6}
Preserving the above notation, we have \[\deg(\mathscr C:\Comp(n)\to\Comp(n))=\frac{\eta_n}{2^{n-1}}\] for all $n\geq 1$. Thus, \[\deg(\mathscr C:\Comp(n)\to\Comp(n))\sim\frac{1-\rho}{\sqrt{\pi(1-3\rho+2\rho^2-\rho^3)n}}\left(\frac{1}{2\rho}\right)^n,\] where $\rho\approx 0.33933$ is the smallest positive real root of the polynomial $1-4x+4x^2-4x^3+4x^4$.
\end{theorem}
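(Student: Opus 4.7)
The plan is to prove the exact formula by a direct generating function computation and then extract the asymptotic via singularity analysis. Since $|\Comp(n)|=2^{n-1}$, we have $\deg(\mathscr C)=f(n)/2^{n-1}$, where
\[
f(n):=\sum_{d\in\Comp(n)}|\mathscr C^{-1}(d)|^2
\]
counts ordered pairs $(c,c')\in\Comp(n)\times\Comp(n)$ with $\mathscr C(c)=\mathscr C(c')$. I first show that any such pair has a canonical decomposition. Inspecting the definition of $\mathscr C$, one sees that $\mathscr C(c)=\mathscr C(c')$ iff (i) $c$ and $c'$ have the same length (recorded as the first entry of the image) and (ii) the subsequences of parts $\geq 2$ agree. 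Thus a pair with common image corresponds bijectively to a triple $(b,I,I')$, where $b=(b_1,\ldots,b_k)$ is a (possibly empty) composition with all parts $\geq 2$, and $I,I'$ are two independent ways to distribute $t:=n-|b|$ indistinguishable $1$'s into the $k+1$ gaps of $b$.

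The gap distribution count is $\binom{t+k}{k}$, and $\left(\tfrac{x^2}{1-x}\right)^k$ enumerates length-$k$ compositions with parts $\geq 2$ by sum, so
\[
\sum_{n\geq 0}f(n)x^n=\sum_{k\geq 0}\frac{x^{2k}}{(1-x)^k}\sum_{t\geq 0}\binom{t+k}{k}^2 x^t.
\]
To evaluate this I swap the order of summation and apply the Legendre polynomial identity
\[
\sum_{k\geq 0}\binom{t+k}{k}^2 y^k=\frac{1}{(1-y)^{t+1}}P_t\!\left(\frac{1+y}{1-y}\right),
\]
which follows from a quadratic transformation of the Gauss hypergeometric series. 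Setting $y=x^2/(1-x)$ gives $1-y=(1-x-x^2)/(1-x)$ and $(1+y)/(1-y)=(1-x+x^2)/(1-x-x^2)$, so summing over $t$ via the classical generating function $\sum_{t\geq 0}P_t(z)r^t=(1-2rz+r^2)^{-1/2}$ with $r=x(1-x)/(1-x-x^2)$ yields
\[
\sum_n f(n)x^n=\frac{1-x}{1-x-x^2}\cdot\frac{1}{\sqrt{1-2rz+r^2}}.
\]
A routine algebraic simplification (expanding numerator and denominator) gives $1-2rz+r^2=(1-4x+4x^2-4x^3+4x^4)/(1-x-x^2)^2$, and the factor $1-x-x^2$ cancels, producing exactly the desired $(1-x)/\sqrt{1-4x+4x^2-4x^3+4x^4}$.

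For the asymptotic, I apply the standard Flajolet--Sedgewick transfer theorem. Write $P(x)=1-4x+4x^2-4x^3+4x^4$. The dominant singularity of $(1-x)/\sqrt{P(x)}$ is at the smallest positive root $\rho$ of $P$; since $\rho$ is a simple root with $P'(\rho)<0$, near $x=\rho$ we have $P(x)\sim -P'(\rho)(\rho-x)$, so
\[
\sum_n\eta_n x^n\sim\frac{1-\rho}{\sqrt{-\rho P'(\rho)}}\cdot\frac{1}{\sqrt{1-x/\rho}},
\]
which gives $\eta_n\sim\frac{1-\rho}{\sqrt{-\rho P'(\rho)\,\pi n}}\,\rho^{-n}$ by transfer. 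Using $P(\rho)=0$ to substitute $4\rho^4=-1+4\rho-4\rho^2+4\rho^3$ into $\rho P'(\rho)=\rho(-4+8\rho-12\rho^2+16\rho^3)$ reduces this to $-\rho P'(\rho)=4(1-3\rho+2\rho^2-\rho^3)$. Dividing by $2^{n-1}$ then yields the stated asymptotic for $\deg(\mathscr C)$. The main obstacle is recognizing the Legendre polynomial identity and performing the subsequent algebraic simplification; an alternative route (should the Legendre identity be undesirable) would derive a linear ODE for $F(x):=\sum f(n)x^n$ combinatorially and compare it with the ODE $2P(x)Y'+P'(x)Y=0$ satisfied by $1/\sqrt{P(x)}$, though this trades one computation for another of comparable length.
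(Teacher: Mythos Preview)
Your proof is correct and genuinely more self-contained than the paper's. The paper computes $|\mathscr C^{-1}(c)|=\binom{c_1}{\ell-1}$ directly (parameterizing by the image composition $c$), obtains the double sum $\sum_{c_1,\ell}\binom{n-c_1-1}{\ell-2}\binom{c_1}{\ell-1}^2$, and then simply \emph{cites} the OEIS entry A217661 both for the identification of this sum with $\eta_n$ and for the asymptotic formula. Your decomposition via the common subsequence $b$ of parts $\geq 2$ is the same count under the reparameterization $c_1=k+t$, $\ell=k+1$, but you then actually \emph{prove} the generating function identity (via the Legendre polynomial trick and the classical generating function $\sum_t P_t(z)r^t=(1-2rz+r^2)^{-1/2}$) and carry out the singularity analysis explicitly, including the reduction $-\rho P'(\rho)=4(1-3\rho+2\rho^2-\rho^3)$. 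So where the paper defers to an external reference, your argument is complete; the cost is the somewhat exotic Legendre identity, which you could replace (as you note) by a holonomic/ODE argument if desired.
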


\begin{proof}
Suppose $c=(c_1,\ldots,c_\ell)$ is a composition of $n$. The preimages of $c$ under $\mathscr C$ are the compositions that have $c_1-(\ell-1)$ parts equal to $1$ and whose other parts are (in order) $c_1+1,\ldots,c_\ell+1$. For example, the preimages of $(4,7,2)$ are $(8,3,1,1)$, $(8,1,3,1)$, $(8,1,1,3)$, $(1,8,3,1)$, $(1,8,1,3)$, $(1,1,8,3)$. In general, the number of preimages of $c$ is $\displaystyle{c_1\choose \ell-1}$. The number of compositions of $n$ with first part $c_1$ and with $\ell$ parts is $\displaystyle{n-c_1-1\choose \ell-2}$. Finally, it is well known that $|\Comp(n)|=2^{n-1}$. Thus, \[\deg(\mathscr C:\Comp(n)\to\Comp(n))=\frac{1}{2^{n-1}}\sum_{c_1=1}^n\sum_{\ell=1}^{n-c_1}{n-c_1-1\choose \ell-2}{c_1\choose \ell-1}^2.\] It is known (see the comments in the OEIS entry A217661 \cite{OEIS}) that \[\eta_n=\sum_{\ell=1}^{n-c_1}\sum_{\ell=1}^{n-c_1}{n-c_1-1\choose \ell-2}{c_1\choose \ell-1}^2,\] so this proves the first statement of the theorem. The asymptotic formula for $\deg(\mathscr C:\Comp(n)\to\Comp(n))$ follows from the asymptotic formula for $\eta_n$, which appears in the OEIS entry A217661. 
\end{proof}

\section{Iterates of a Function}\label{Sec:Iterates} 

Now that we have examined the degrees of several specific discrete dynamical systems, we will shift our focus to a problem with a more extremal-combinatorics flavor. Let us fix an integer $k\geq 2$. In this section, we compare the degree of an arbitrary function $f:X\to X$ with the degree of its iterates $f^k:X\to X$. Here $X$ is a finite set of size $n$. We first observe that \[\dfrac{1}{n^{\gamma-1}}\leq\dfrac{\deg(f^k)}{\deg(f)^\gamma}\leq n\] for every real $\gamma\geq 0$. Indeed, the upper bound follows from the fact that $\deg(f^k)\leq n$ and $\deg(f)\geq 1$. To prove the lower bound, notice that if $x,x'\in X$ satisfy $f(x)=f(x')$, then $f^k(x)=f^k(x')$. This proves that $\deg(f^k)\geq \deg(f)$, so \[\frac{\deg(f^k)}{\deg(f)^\gamma}\geq \frac{1}{\deg(f)^{\gamma-1}}\geq \frac{1}{n^{\gamma-1}}.\] Furthermore, the lower bound is attained by a constant function. The main result we will prove below is that if $\gamma\geq 2-1/2^{k-1}$, then the upper bound can be replaced by $n^{1-1/2^{k-1}}$. In addition, we will construct an example showing that if $\gamma$ and $k$ are fixed, then this upper bound is tight up to a constant factor. Hence, for every integer $k\geq 2$ and real number $\gamma\geq 2-1/2^{k-1}$, we have determined how large the ratio $\dfrac{\deg(f^k)}{\deg(f)^\gamma}$ can be, up to a constant factor. 

\begin{theorem}\label{Thm3}
Let $k\geq 2$ be an integer, and let $\gamma\geq 0$ be a real number. As $n\to\infty$, we have \[\max_{\substack{f:X\to X\\ |X|=n}}\frac{\deg(f^k)}{\deg(f)^\gamma}\geq \frac{1+o(1)}{(k+1)^\gamma}n^{1-1/2^{k-1}}.\] If $\gamma\geq 2-1/2^{k-1}$, then \[\max_{\substack{f:X\to X\\ |X|=n}}\frac{\deg(f^k)}{\deg(f)^{\gamma}}\leq n^{1-1/2^{k-1}}.\]
\end{theorem}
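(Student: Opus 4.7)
The plan is to split Theorem \ref{Thm3} into the lower and upper bounds. The upper bound is proved by induction on $k$ using a single Cauchy--Schwarz step; the lower bound is proved by an explicit ``cascade'' construction with geometrically growing layers. Throughout, write $D_j = \deg(f^j)$, $D = D_1$, $M_j = \max_y |f^{-j}(y)|$, and $n = |X|$.

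\textbf{Upper bound.} The engine is the auxiliary estimate
\[
D_{a+b} \;\leq\; M_a \cdot D_b \qquad \text{for all } a, b \geq 1.
\]
To prove it, expand $|f^{-(a+b)}(y)| = \sum_{z \in f^{-a}(y)} |f^{-b}(z)|$ and apply Cauchy--Schwarz to this inner sum, obtaining $|f^{-(a+b)}(y)|^2 \leq |f^{-a}(y)| \sum_{z \in f^{-a}(y)} |f^{-b}(z)|^2$. Summing over $y$ and re-indexing the double sum by $z$ (since $z \in f^{-a}(y)$ iff $y = f^a(z)$) gives $n D_{a+b} \leq \sum_z |f^{-a}(f^a(z))| \cdot |f^{-b}(z)|^2 \leq M_a \cdot n D_b$. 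The trivial bound $M_a^2 \leq \sum_y |f^{-a}(y)|^2 = n D_a$ supplies $M_a \leq \sqrt{n D_a}$; the case $(a,b) = (k-1, 1)$ then yields $D_k \leq \sqrt{n D_{k-1}} \cdot D$. Writing $\alpha_k = 1 - 2^{1-k}$ and $\beta_k = 2 - 2^{1-k}$, the identities $\alpha_k = (1 + \alpha_{k-1})/2$ and $\beta_k = 1 + \beta_{k-1}/2$ (with $\alpha_1 = 0$, $\beta_1 = 1$) are precisely what induction needs to conclude $D_k \leq n^{\alpha_k} D^{\beta_k}$. Since $D \geq 1$, this gives $D_k / D^\gamma \leq n^{\alpha_k}$ whenever $\gamma \geq \beta_k$.

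\textbf{Lower bound.} I would construct $f$ as a layered cascade. Pick integers $r_1, \ldots, r_k$ with $r_j \sim n^{2^{-j}}$, set $n_0 = 1$ and $n_j = r_1 \cdots r_j$, so that $n_j \sim n^{1 - 2^{-j}}$ and $N := \sum_{j=0}^{k} n_j = o(n)$. Choose pairwise disjoint subsets $X_0, \ldots, X_k \subseteq X$ with $|X_j| = n_j$ (and $X_0 = \{v_0\}$), let $f$ restrict to an $r_j$-to-one surjection $X_j \to X_{j-1}$ for each $j \geq 1$, and let $f$ act as a cyclic permutation of $X \setminus \bigcup_j X_j$. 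A direct count gives
\[
n D = \sum_{j=1}^{k} n_{j-1} r_j^2 \;+\; (n - N),
\]
and each summand satisfies $n_{j-1} r_j^2 = n_j r_j \sim n$ by construction, so $D = k + 1 + o(1)$. For the iterated map, the only cascade vertex with nonempty $f^{-k}$-preimage is $v_0$, with $|f^{-k}(v_0)| = n_k$; vertices outside the cascade still contribute $1$ each. Hence $n D_k = n_k^2 + (n - N) \sim n^{2 - 2^{1-k}}$, giving $D_k \sim n^{1 - 2^{1-k}}$ and thus $D_k / D^\gamma \geq \dfrac{1 + o(1)}{(k+1)^\gamma} \, n^{1 - 2^{1-k}}$.

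\textbf{Main obstacle.} The proof is tight because both bounds are controlled by the same recursion $\alpha_k = (1 + \alpha_{k-1})/2$: the cascade's layer ratios $n_{j+1}/n_j = r_{j+1} \sim n^{2^{-(j+1)}}$ are calibrated so that each layer contributes exactly one factor of $n$ to $\sum |f^{-1}(v)|^2$, perfectly matching what the $\sqrt{n D_{k-1}}$ step of Cauchy--Schwarz extracts in the upper bound. The only mild technical issue is rounding: $n^{2^{-j}}$ must be rounded to an integer to serve as $r_j$, and the $r_j$'s must be chosen so that $n_{j-1} \mid n_j$. For fixed $k$ and $n \to \infty$, $r_j \sim n^{2^{-j}}$ is large, so any rounding errors are absorbed into the $(1 + o(1))$ factors throughout.
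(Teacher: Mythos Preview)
Your proof is correct and follows essentially the same approach as the paper: the upper bound via Cauchy--Schwarz plus induction is exactly the paper's argument (the paper packages the key step as the slightly more general inequality $\deg(f\circ g)\leq\sqrt{n}\sqrt{\deg(f)}\deg(g)$, but your $D_{a+b}\leq M_a D_b$ together with $M_a\leq\sqrt{nD_a}$ is the same Cauchy--Schwarz computation), and your cascade with layer ratios $r_j\sim n^{2^{-j}}$ is a minor variant of the paper's tree $T(b_0,\ldots,b_k)$ with the same branching factors, differing only in that you pad out to size $n$ with an external cycle while the paper uses path-tails inside the tree. The one small omission is that you never define $f(v_0)$; taking $f(v_0)=v_0$ (as the paper does for its root) repairs this with no effect on the asymptotics.
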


\begin{remark}\label{Rem1}
One special case of Theorem \ref{Thm3} tells us how badly the degree of noninvertibility can fail to satisfy the submultiplicativity inequality $\deg(f\circ g)\leq\deg(f)\deg(g)$ in the specific case in which $f=g$. Indeed, if we set $k=\gamma=2$, then the theorem tells us that \[\frac{1+o(1)}{9}\sqrt n\leq\max_{\substack{f:X\to X\\ |X|=n}}\frac{\deg(f^2)}{\deg(f)^2}\leq\sqrt n.\]  
\end{remark}

The first part of Theorem \ref{Thm3} will follow from the next proposition. 

\begin{proposition}\label{Prop1}
Fix $k\geq 2$. For $n\geq 1$, there exist sets $X_n$ with $|X_n|=n$ and functions $f_n:X_n\to X_n$ such that \[\deg(f_n)=k+1+o(1)\quad\text{and}\quad\deg(f_n^k)=n^{1-1/2^{k-1}}(1+o(1))\quad\text{as }n\to\infty.\]
\end{proposition}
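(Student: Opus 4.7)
My plan is to build $f_n$ from two ingredients: a rooted tree $T_n \subset X_n$ of depth $k$ with carefully chosen branching factors, on which $f_n$ sends each non-root element to its parent and fixes the root $z$; and a bijection on the remaining ``filler'' vertices $X_n \setminus T_n$. The branching factors will be tuned so that $T_n$ contributes the same amount ($\sim n$) to $\sum_v |f_n^{-1}(v)|^2$ at every level, keeping $\deg(f_n)$ bounded, while the cumulative depth of $T_n$ produces a single element with $\sim n^{1-1/2^k}$ preimages under $f_n^k$, yielding the desired blowup.

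Concretely, I would fix a root $z \in X_n$, set $r_i := \lceil n^{1/2^i} \rceil$ for $1 \leq i \leq k$, and place $n_i := r_1 \cdots r_i$ elements at depth $i$ of $T_n$ (with $n_0 = 1$), so that $n_i \sim n^{1-1/2^i}$ by a telescoping exponent sum. Since $|T_n| = \sum_{i=0}^k n_i \sim n^{1-1/2^k} = o(n)$, there is room for the filler (for $n$ sufficiently large), which I would organize into a single cycle so that each filler vertex has exactly one $f_n$-preimage.

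The first main computation is $\deg(f_n)$, driven by the key identity $n_i\, r_{i+1}^2 \sim n$ for every $0 \leq i \leq k-1$: a level-$i$ vertex of $T_n$ has $r_{i+1}$ preimages (plus an asymptotically negligible $+1$ at $z$ from the self-loop), a leaf has none, and each filler vertex has one. This gives $\sum_v |f_n^{-1}(v)|^2 = \sum_{i=0}^{k-1} n_i r_{i+1}^2 + (n - |T_n|) + o(n) = (k+1)n + o(n)$, hence $\deg(f_n) = k+1+o(1)$. The second main computation is $\deg(f_n^k)$: because $f_n(z) = z$ and $T_n$ has depth $k$, every element of $T_n$ lands at $z$ after $k$ iterations, so $|f_n^{-k}(z)| = |T_n| \sim n^{1-1/2^k}$; all other tree vertices are outside the image of $f_n^k$, and each filler vertex has a unique $f_n^k$-preimage. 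Thus $\sum_v |f_n^{-k}(v)|^2 \sim |T_n|^2 \sim n^{2-1/2^{k-1}}$, and dividing by $n$ gives $\deg(f_n^k) = n^{1-1/2^{k-1}}(1+o(1))$.

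I do not expect any real obstacle; the main content is recognizing the ``halving'' exponent recipe $r_i = n^{1/2^i}$, which is precisely what balances the per-level contribution to $\deg(f_n)$ at $\sim n$. What requires care is simply verifying that the rounding in the ceilings, the $+1$ at the root from the self-loop, the geometric tail in $|T_n|$, and the exact size of the filler all collapse into the claimed $o(1)$ error terms.
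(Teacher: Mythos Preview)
Your construction is correct and yields the stated asymptotics, but it differs from the paper's in an interesting way. The paper also builds a rooted tree with branching factors following the halving-exponent recipe $b_i \approx n^{1/2^{i+1}}$ at depth $i$, so the first $k$ levels of the two trees coincide. The difference is in how the remaining $\sim n$ vertices are accounted for. You stop the tree at depth $k$ (so $|T_n|\sim n^{1-1/2^k}=o(n)$) and place the bulk of $X_n$ in an external cycle; the paper instead appends to each depth-$k$ vertex a path of length $b_k\approx n^{1/2^k}$, so that the tree itself has $\sim n$ vertices and only an $o(n)$ filler of fixed points is needed to hit size exactly $n$.

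Both devices serve the same purpose: they supply $\sim n$ vertices each with exactly one $f_n$-preimage, which furnishes the ``$+1$'' in $\deg(f_n)=k+1+o(1)$ without inflating $\deg(f_n^k)$. Your version has the advantage that the whole tree collapses to the root under $f_n^k$, so $|f_n^{-k}(z)|=|T_n|$ on the nose and non-root tree vertices have no $f_n^k$-preimages at all; this makes the second computation a one-liner. The paper's version keeps everything inside a single connected tree, at the cost of tracking preimages through the path tails (depth-$t$ vertices for $1\le t\le k-1$ still have $f_n^k$-preimages coming from the tails, though these contributions are lower order).
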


\begin{proof}
Let $T(m_0)$ be the rooted tree that is isomorphic to a path with $m_0$ edges (so each non-leaf vertex has exactly one child). Let $T(m_0,m_1)$ be the rooted tree whose root has $m_0$ subtrees, each of which is isomorphic to $T(m_1)$. In general, let $T(m_0,\ldots,m_p)$ be the rooted tree in which the root has $m_0$ subtrees, each of which is isomorphic to $T(m_1,\ldots,m_p)$. Thus, $T(m_0,\ldots,m_p)$ has $1+m_0+m_0m_1+\cdots+m_0m_1\cdots m_p$ vertices. Consider a sufficiently large integer $b$, and let $\mathcal T_b=T(b_0,\ldots,b_k)$, where $b_i=\left\lfloor b^{1/2^i}\right\rfloor$ for $0\leq i\leq k-1$ and $b_k=b_{k-1}=\left\lfloor b^{1/2^{k-1}}\right\rfloor$. Let $X^{(b)}$ be the set of vertices in $\mathcal T_b$. We obtain a function $F_b:X^{(b)}\to X^{(b)}$ by sending the root of $\mathcal T_b$ to itself and sending every non-root vertex in $\mathcal T_b$ to its parent (see Figure \ref{Fig1}). Let \[n_b=|X^{(b)}|=1+b_0+b_0b_1+\cdots +b_0b_1\cdots b_k,\] and note that $n_b=b^2(1+o(1))$ (here and in what follows, the asymptotic notation is taken as $b\to\infty$). 
Our first goal will be to estimate the degrees of $F_b$ and $F_b^k$. We will then prove that for every sufficiently large $n$, we can find an $n$-element set $X_n$, a function $f_n:X_n\to X_n$, and a suitable function $F_b$ such that $\deg(F_b)$ approximates $\deg(f_n)$ and $\deg(F_b^k)$ approximates $\deg(f_n^k)$. 

\begin{figure}[ht]
\begin{center}
\includegraphics[height=4.1cm]{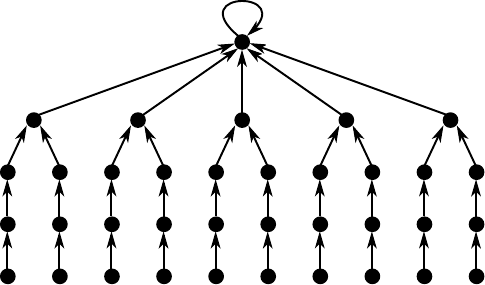}
\caption{The map $F_5$ when $k=2$. In this case, $b=b_0=5$ and $b_1=b_2=2$.}
\label{Fig1}
\end{center}  
\end{figure}

We define the \emph{depth} of a vertex $v$ in $\mathcal T_b$ to be the smallest nonnegative integer $t$ such that $F_b^t(v)$ is the root of $\mathcal T_b$. The root of $\mathcal T_b$ is the only vertex of depth $0$; it has $b_0$ preimages under $F_b$ and $1+\sum_{i=0}^{k-1}b_0b_1\cdots b_i$ preimages under $F_b^k$. For $1\leq t\leq k-1$, there are $b_0b_1\cdots b_{t-1}$ vertices of depth $t$, each of which has $b_t$ preimages under $F_b$ and $b_tb_{t+1}\cdots b_{k-1}$ preimages under $F_b^k$. There are $b_0b_1\cdots b_{k-1}$ vertices of depth $k$, each of which has $1$ preimage under $F_b$ and $1$ preimage under $F_b^k$. For $k+1\leq t\leq k+b_k-1$, there are $b_0b_1\cdots b_{k-1}$ vertices of depth $t$, each of which has $1$ preimage under $F_b$ and has $0$ preimages under $F_b^k$. Finally, there are $b_0b_1\cdots b_{k-1}$ vertices of depth $k+b_k$ (the leaves), each of which has $0$ preimages under $F_b$ and under $F_b^k$. Combining this information, we find that \[\deg(F_b)=\frac{1}{n_b}\left[b_0^2+b_0b_1^2+b_0b_1b_2^2+\cdots+b_0b_1\cdots b_{k-1}b_k^2+\sum_{t=k}^{b_k+k-1}b_0b_1\cdots b_{k-1}\cdot 1^2\right]\] \[=\frac{1}{n_b}[b_0^2+b_0b_1^2+b_0b_1b_2^2+\cdots+b_0b_1\cdots b_{k-1}b_k^2+b_0b_1\cdots b_{k-1}b_k]\]  
\begin{equation}\label{Eq4}
=\frac{1}{b^2(1+o(1))}(k+1)b^2(1+o(1))=k+1+o(1)
\end{equation} and 

\[\deg(F_b^k)\!=\!\frac{1}{n_b}\hspace{-2.53pt}\left[\left(1+\sum_{i=0}^{k-1}b_0b_1\cdots b_i\right)^2+\sum_{t=1}^{k-1}b_0b_1\cdots b_{t-1}(b_tb_{t+1}\cdots b_{k-1})^2+b_0b_1\cdots b_{k-1}\cdot 1^2\right]\]
\begin{equation}\label{Eq2}
=\frac{1}{n_b}\left(b_0b_1\cdots b_{k-1}\right)^2(1+o(1))=\frac{1}{n_b}(b^{2-1/2^{k-1}})^2(1+o(1))=n_b^{1-1/2^{k-1}}(1+o(1)).
\end{equation}

Now let $n$ be a sufficiently large integer, and let $b$ be the unique integer such that $n_b\leq n<n_{b+1}$. Let $X_n=X^{(b)}\cup Y_n$, where $Y_n$ is a set of size $n-n_b$ that is disjoint from $X^{(b)}$. Define $f_n:X_n\to X_n$ by $f_n(x)=F_b(x)$ for $x\in X^{(b)}$ and $f_n(y)=y$ for $y\in Y_n$. It is straightforward to check that \[n_b\deg(F_b)\leq n\deg(f_n)\leq n_{b+1}\deg(F_{b+1}),\] so it follows from \eqref{Eq4} that $\dfrac{n_b}{n}(k+1+o(1))\leq\deg(f_n)\leq\dfrac{n_{b+1}}{n}(k+1+o(1))$. Since $n_b=b^2(1+o(1))$ and $n_{b+1}=b^2(1+o(1))$, we have $\dfrac{n_b}{n}=1+o(1)$ and $\dfrac{n_{b+1}}{n}=1+o(1)$. Consequently, \[\deg(f_n)=k+1+o(1).\] A similar argument shows that \[\deg(f_n^k)=n^{1-1/2^{k-1}}(1+o(1)). \qedhere\]
\end{proof}

In an earlier draft of this article, we proved that for $f:X\to X$ with $|X|=n$, the inequality $\dfrac{\deg(f^k)}{\deg(f)^\gamma}\leq\beta_k^{2-1/2^{k-1}}n^{1-1/2^{k-1}}$ holds, where $\beta_k$ depends only on $k$ and satisfies $\beta_k\sim k/4$ as $k\to\infty$. We are very grateful to Yan Sheng Ang, who showed us the proof of the following theorem. This theorem is more general than our original result, and it immediately implies an improved upper bound for $\dfrac{\deg(f^k)}{\deg(f)^\gamma}$ (without the $\beta_k^{2-1/2^{k-1}}$ factor). We have seen that the submultiplicativity inequality $\deg(f\circ g)\leq\deg(f)\deg(g)$ does not hold in general; the next theorem can be viewed as a corrected version of this inequality. 

\begin{theorem}\label{Thm7}
Let $X$ be an $n$-element set. For all functions $f,g:X\to X$, we have the inequality \[\deg(f\circ g)\leq\sqrt{n}\sqrt{\deg(f)}\deg(g).\] Furthermore, equality holds if and only if $f$ is a constant function and $g$ is a bijection.  
\end{theorem}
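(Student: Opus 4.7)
The plan is to expand $\deg(f\circ g)$ directly from its fiber-counting definition and then chain together two applications of Cauchy-Schwarz (really one Cauchy-Schwarz and one $\ell^\infty$-versus-$\ell^2$ estimate). Write $a_y=|g^{-1}(y)|$ and $b_z=|f^{-1}(z)|$, so that $\sum_y a_y=\sum_z b_z=n$, $\sum_y a_y^2=n\deg(g)$, and $\sum_z b_z^2=n\deg(f)$. The key identity to start from is
\[n\deg(f\circ g)=\sum_{z\in X}|g^{-1}(f^{-1}(z))|^2=\sum_{z\in X}\Bigl(\sum_{y\in f^{-1}(z)}a_y\Bigr)^2.\]

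First I would apply Cauchy-Schwarz fiber by fiber: $\bigl(\sum_{y\in f^{-1}(z)}a_y\bigr)^2\leq b_z\sum_{y\in f^{-1}(z)}a_y^2$. Summing over $z$ and reindexing on the right by grouping $y$ with its image, this becomes
\[n\deg(f\circ g)\leq\sum_{z}b_z\sum_{y\in f^{-1}(z)}a_y^2=\sum_{y\in X}b_{f(y)}\,a_y^2.\]
Next, pull out the maximum of $b_{f(y)}$: $\sum_y b_{f(y)}a_y^2\leq(\max_{z}b_z)\sum_y a_y^2=(\max_z b_z)\cdot n\deg(g)$. Finally bound $\max_z b_z\leq\sqrt{\sum_z b_z^2}=\sqrt{n\deg(f)}$ (the standard $\ell^\infty\le\ell^2$ inequality). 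Combining the three steps yields
\[n\deg(f\circ g)\leq\sqrt{n\deg(f)}\cdot n\deg(g),\]
and dividing by $n$ gives the claimed inequality.

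For the equality characterization, I would walk back through each of the three inequalities. Equality in $\max_z b_z\leq\sqrt{\sum_z b_z^2}$ forces exactly one $b_z$ to be nonzero, i.e., $f$ is constant with image $\{z_0\}$ and $b_{z_0}=n$. Given that $f$ is constant, the $\max$ step is automatic because $b_{f(y)}=n$ for every $y$. The remaining constraint is equality in the fiber-wise Cauchy-Schwarz at $z=z_0$, which reads $n^2=(\sum_y a_y)^2=n\sum_y a_y^2$, i.e., $\sum_y a_y^2=n$. Since the $a_y$ are nonnegative integers with $\sum_y a_y=n$, the equality $\sum a_y^2=\sum a_y$ forces each $a_y\in\{0,1\}$ and then $\sum a_y=n$ forces $a_y=1$ for all $y$, meaning $g$ is a bijection. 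The converse is immediate: if $f$ is constant and $g$ is a bijection, then $\deg(f\circ g)=n$, $\deg(f)=n$, $\deg(g)=1$, and both sides equal $n$.

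The proof is mostly mechanical once the right grouping is chosen; the only subtlety that needs care is in the equality analysis, where one has to translate $\sum a_y^2=\sum a_y$ for nonnegative integers into the statement that $g$ is a bijection, and to check that after forcing $f$ to be constant the middle inequality becomes an equality automatically rather than imposing a new condition. An alternative presentation would estimate $\sum_y b_{f(y)}a_y^2$ by Cauchy-Schwarz as $\sqrt{\sum_y b_{f(y)}^2}\sqrt{\sum_y a_y^4}$, but that route gives a weaker bound with $\sum_z b_z^3$ appearing, so the $\ell^\infty$ bound on $b_{f(y)}$ is really the right move.
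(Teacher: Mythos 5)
Your proof is correct and follows essentially the same route as the paper's: a fiber-by-fiber Cauchy--Schwarz applied to $\bigl(\sum_{y\in f^{-1}(z)}|g^{-1}(y)|\bigr)^2$, followed by bounding the fiber sizes of $f$ by $\sqrt{n\deg(f)}$ (your $\ell^\infty\le\ell^2$ step is exactly the paper's inequality $a_i^2\le n\deg(f)$), and the equality analysis likewise matches, forcing $f$ constant and then $g$ bijective. No changes needed.
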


\begin{proof}
Let $f(g(X))=\{x_1,\ldots,x_r\}$. For $1\leq i\leq r$, let $f^{-1}(x_i)=\{y_{i1},\ldots,y_{ia_i}\}$. For $1\leq i\leq r$ and $1\leq j\leq a_i$, let $b_{ij}=|g^{-1}(y_{ij})|$. We have 
\begin{equation}\label{Eq3}
n\deg(f\circ g)=\sum_{i=1}^r|g^{-1}(f^{-1}(x_i))|^2=\sum_{i=1}^r\left(\sum_{j=1}^{a_i}1\cdot b_{ij}\right)^2\leq\sum_{i=1}^ra_i\sum_{j=1}^{a_i}b_{ij}^2,
\end{equation} where we have applied the Cauchy-Schwarz inequality in the last step. Since 
\begin{equation}\label{Eq6}
n\deg(f)\geq\sum_{i=1}^r|f^{-1}(x_i)|^2=\sum_{i=1}^ra_i^2,
\end{equation} we have 
\begin{equation}\label{Eq7}
n\deg(f)\geq a_i^2\quad\text{for all }1\leq i\leq r.
\end{equation}
Consequently, \[n\deg(f\circ g)\leq\sum_{i=1}^ra_i\sum_{j=1}^{a_i}b_{ij}^2\leq \sqrt{n}\sqrt{\deg(f)}\sum_{i=1}^r\sum_{j=1}^{a_i}b_{ij}^2.\]
Because $g(X)\subseteq f^{-1}(f(g(X)))=\{y_{ij}:1\leq i\leq r, 1\leq j\leq a_i\}$, we have $\displaystyle n\deg(g)=\sum_{i=1}^r\sum_{j=1}^{a_i}b_{ij}^2$. This shows that \[n\deg(f\circ g)\leq \sqrt{n}\sqrt{\deg(f)}n\deg(g),\] and the desired inequality follows. 

If $f$ is a constant function and $g$ is a bijection, then the inequality in the theorem is an equality because $\deg(f)=\deg(f\circ g)=n$ and $\deg(g)=1$. To prove the converse, suppose the inequality is an equality. This means the inequality in \eqref{Eq6} must be an equality, so $f(X)=\{x_1,\ldots,x_r\}$. The inequality in \eqref{Eq7} must also be an equality for all $1\leq i\leq r$. Referring to \eqref{Eq6}, we see that this forces $r=1$. Thus, $f(X)=\{x_1\}$, so $f$ is a constant function. It follows that $a_1=|f^{-1}(x_1)|=n$, so $\{y_{11},\ldots,y_{1n}\}=X$. Finally, the inequality in \eqref{Eq3} is an equality. This can only happen if the numbers $b_{1j}$ are equal for all $1\leq j\leq n$. This, in turn, forces $g$ to be a bijection.  
\end{proof}

\begin{proof}[Proof of Theorem \ref{Thm3}]
As mentioned above, the first statement of the theorem follows from Proposition \ref{Prop1}. To prove the second statement, note that it suffices to prove the case in which $\gamma=2-1/2^{k-1}$. Choose $f:X\to X$. We want to prove that \[\deg(f^k)\leq\deg(f)^{2-1/2^{k-1}}n^{1-1/2^{k-1}}\quad\text{for all }k\geq 1.\] This is certainly true if $k=1$, so we may assume $k\geq 2$ and induct on $k$. By Theorem \ref{Thm7} and our induction hypothesis, we have \[\deg(f^k)=\deg(f^{k-1}\circ f)\leq\sqrt{n}\sqrt{\deg(f^{k-1})}\deg(f)\leq\sqrt{n}\sqrt{\deg(f)^{2-1/2^{k-2}}n^{1-1/2^{k-2}}}\deg(f)\] \[=\sqrt{n}\deg(f)^{1-1/2^{k-1}}n^{1/2-1/2^{k-1}}\deg(f)=\deg(f)^{2-1/2^{k-1}}n^{1-1/2^{k-1}}. \qedhere\]   
\end{proof}

\section{Future Directions}\label{Sec:Conclusion}  

In this section, we list some possible directions for extending the investigation of degrees of noninvertibility. Of course, a natural place to start would be to consider other specific families of combinatorially interesting discrete dynamical systems. Even restricting our attention to the specific families considered in Section~\ref{Sec:Specific}, there are several problems that remain open. 

To state our first problem, we require the maps $t_i:S_n\to S_n$ from Section~\ref{Sec:Bubble}. The monoid $H_0(S_n)=\langle t_1,\ldots,t_{n-1}\rangle$ generated by the operators $t_1,\ldots,t_{n-1}$ is known as the $0$-Hecke monoid\footnote{Knuth calls the elements of $H_0(S_n)$ \emph{primitive sorting maps} \cite{Knuth2}.} of $S_n$ \cite{Hivert}. Each element $T\in H_0(S_n)$ is a function from $S_n$ to $S_n$ given by $T=t_{i_r}\circ \cdots\circ t_{i_1}$ for some $i_1,\ldots,i_r\in[n-1]$ (we allow the list $i_1,\ldots,i_r$ to contain repeats). We say $T$ is \emph{eventually constant} if there exists a positive integer $k$ such that $T^k$ is the constant function that sends every permutation in $S_n$ to the identity permutation $123\cdots n$. One can show that $T$ is eventually constant if and only if every element of $[n-1]$ appears at least once in the list $i_1,\ldots,i_r$. Notice that the bubble sort map ${\bf B}:S_n\to S_n$ is an eventually constant element of $H_0(S_n)$. It would be interesting to study the degrees of other eventually constant elements of the $0$-Hecke monoid of $S_n$. 

For example, if $r_n^{\text{odd}}$ (respectively, $r_n^{\text{even}}$) denotes the largest odd (respectively, even) element of $[n-1]$, then we define $T_{\text{odd}}=t_{r_n^{\text{odd}}}\circ\cdots\circ t_5\circ t_3\circ t_1$ and $T_{\text{even}}=t_{r_n^{\text{even}}}\circ\cdots\circ t_6\circ t_4\circ t_2$. We then let \[T_{\alt}=T_{\text{even}}\circ T_{\text{odd}}\quad\text{and}\quad T_{\tla}=T_{\text{odd}}\circ T_{\text{even}}.\] Note that the map $T_{\alt}$ is known as the odd-even sort \cite{Knuth2, Lakshmivarahan}. The following conjecture states that among all eventually constant elements of $H_0(S_n)$, bubble sort is the closest to being invertible, while $T_{\tla}$ is the farthest. 

\begin{conjecture}\label{Conj2}
If $T\in H_0(S_n)$ is eventually constant, then \[\deg({\bf B}:S_n\to S_n)\leq \deg(T:S_n\to S_n)\leq\deg(T_{\tla}:S_n\to S_n).\] 
\end{conjecture}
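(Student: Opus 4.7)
The plan is to combine a monotonicity principle with structural reductions that target each inequality separately. The foundational step is a monotonicity lemma: for any map $T': S_n \to S_n$ and any $i \in [n-1]$, one has $\deg(t_i \circ T') \geq \deg(T')$. This is proved by a direct collision count: splitting the pairs $(\pi,\pi')$ with $t_i(T'(\pi)) = t_i(T'(\pi'))$ according to whether $T'(\pi) = T'(\pi')$ or they differ by a swap at positions $i, i+1$ (with both having no descent at $i$), one obtains
\[
\deg(t_i \circ T') = \deg(T') + \frac{2}{n!} \sum_{\tau:\, i \notin \Des(\tau)} |T'^{-1}(\tau)|\cdot |T'^{-1}(\tau^{(i)})|,
\]
where $\tau^{(i)}$ denotes $\tau$ with positions $i, i+1$ swapped. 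Iterating, prepending any element of $H_0(S_n)$ to a composition cannot decrease the degree.

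For the lower bound $\deg(\mathbf{B}) \leq \deg(T)$, I would proceed by induction on $n$. Since $T$ is eventually constant, every reduced expression of $T$ uses $t_{n-1}$ at least once. Using the braid and commutation relations of $H_0(S_n)$, the aim is to rewrite $T$ with an outermost occurrence of $t_{n-1}$ as $T = T_1 \circ t_{n-1} \circ T_2$, where $T_2$ (after absorbing commuting generators) lies in the parabolic sub-monoid $\langle t_1, \ldots, t_{n-2}\rangle \cong H_0(S_{n-1})$. The recursive identity $\mathbf{B}(L\,n\,R) = \mathbf{B}(L)\,R\,n$ from Section \ref{Sec:Bubble} expresses $\mathbf{B}$'s fiber structure directly in terms of bubble sort on $S_{n-1}$, and combined with the induction hypothesis applied to $T_2$ and the monotonicity lemma applied to peel off the outer $T_1$, this should deliver $\deg(T) \geq \deg(\mathbf{B})$.

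For the upper bound $\deg(T) \leq \deg(T_{\tla})$, the plan is to first derive an explicit formula for $|T_{\tla}^{-1}(\pi)|$. Because the generators within each of the two layers $T_{\mathrm{odd}}$ and $T_{\mathrm{even}}$ mutually commute, the preimage count should factor over overlapping windows of consecutive positions, each contributing a local factor determined by the descent pattern of $\pi$; this would give an analogue of Lemma \ref{Lem2} for $T_{\tla}$ and a closed form for $\deg(T_{\tla})$. Then, for an arbitrary eventually constant $T$, one bounds $|T^{-1}(\pi)|$ by counting valid backward traces through $T$'s reduced expression (each step contributing a factor of $2$ when the current permutation has no descent at the relevant position, and killing the trace otherwise), and argues that the two-layer parallel structure of $T_{\tla}$ maximizes the resulting sum of squared fiber sizes.

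The main obstacle is that the monotonicity lemma only controls the change in degree under appending a single generator; it does not directly compare two eventually constant elements with reduced expressions of different shapes or lengths. Both bounds therefore require genuinely extremal arguments: an injection from the collision set of $\mathbf{B}$ into that of $T$, and an injection in the reverse direction from the collision set of $T$ into that of $T_{\tla}$. Constructing these injections and navigating the many braid-equivalent reduced expressions of a given $T$ is where the main difficulty lies, and I expect new structural results about the lattice of reduced expressions in $H_0(S_n)$, beyond what is developed in Section \ref{Sec:Bubble}, may be needed.
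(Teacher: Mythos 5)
The statement you are addressing is Conjecture~\ref{Conj2}, which the paper leaves open: the authors explicitly ask for a proof of either inequality, so there is no proof of record to compare against, and the question is whether your outline closes the gap. It does not. Your monotonicity lemma is correct as stated, and the collision-count identity behind it is right: post-composing with $t_i$ merges each fiber over an ascent-at-$i$ permutation $\tau$ with the fiber over $\tau^{(i)}$, so $\deg(t_i\circ T')\geq\deg(T')$. But this controls only \emph{outer} composition. The inner analogue $\deg(T'\circ t_i)\geq\deg(T')$ is false in general: precomposition with $t_i$ replaces each fiber $S$ of $T'$ by a set of size $2\,|\{\sigma\in S: i\notin\Des(\sigma)\}|$, and one can arrange fiber profiles (one large fiber consisting mostly of descent-at-$i$ permutations, with the ascent-at-$i$ mass spread over singletons) for which the sum of squares strictly drops. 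So monotonicity cannot be applied from both ends, and, as you acknowledge, it cannot by itself compare two eventually constant words of different shapes.

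Both of your main steps then presuppose what is to be proved. For the lower bound, in the factorization $T=T_1\circ t_{n-1}\circ T_2$ with $T_2\in\langle t_1,\ldots,t_{n-2}\rangle$, nothing forces $T_2$ to be eventually constant in $H_0(S_{n-1})$: the occurrences of $t_1,\ldots,t_{n-2}$ that witness eventual constancy of $T$ may all lie in $T_1$, which your peeling step discards. Concretely, for $T=t_1\circ t_2\circ\cdots\circ t_{n-1}$ the rightmost $t_{n-1}$ is innermost, $T_2$ is the identity, and peeling gives only $\deg(T)\geq\deg(t_{n-1})=3/2$, which says nothing against the target $\deg(\mathbf{B}:S_n\to S_n)=(n+1)(n+2)/6$; you would also still need to relate a degree over $S_{n-1}$ to one over $S_n$, which the recursion $\mathbf{B}(LmR)=\mathbf{B}(L)Rm$ does not do by itself. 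For the upper bound, the generators within $T_{\text{odd}}$ commute, and likewise within $T_{\text{even}}$, but the two layers do not commute with each other, so the fibers of $T_{\tla}$ do not factor over local windows in any evident way (no analogue of Lemma~\ref{Lem2} follows from commutativity alone); and the culminating claim that the two-layer structure maximizes the sum of squared fiber sizes among all eventually constant $T$ is precisely the conjectured extremal statement, asserted without an injection or any other mechanism. As it stands, your proposal is a reasonable program with a correct auxiliary lemma, but both inequalities of Conjecture~\ref{Conj2} remain open after it.
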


It would be interesting to have a proof of either (if not both) of the inequalities in Conjecture~\ref{Conj2}. We know by Theorem \ref{Thm2} that $\deg({\bf B}:S_n\to S_n)=\dfrac{n(n+1)}{6}$. However, we do not know $\deg(T_{\tla}:S_n\to S_n)$. What is fascinating is that while $\deg({\bf B}:S_n\to S_n)$ grows quadratically in $n$, $\deg(T_{\alt}:S_n\to S_n)$ and $\deg(T_{\tla}:S_n\to S_n)$ grow exponentially. Indeed, one can show that $|T_{\alt}(S_n)|$ and $|T_{\tla}(S_n)|$ are both equal to the number of up-down permutations of length $n$. It then follows from Lemma \ref{Lem1} and the known asymptotic formula for the number of up-down permutations of length $n$ (see sequence A000111 in \cite{OEIS}) that $\deg(T_{\alt}:S_n\to S_n)$ and $\deg(T_{\tla}:S_n\to S_n)$ are both at least $\dfrac{n!}{|T_{\tla}(S_n)|}\sim \dfrac{\pi}{4}\left(\dfrac{\pi}{2}\right)^n$.   

\begin{problem}
Find improved asymptotic estimates (or even exact formulas!) for \[\deg(T_{\alt}:S_n\to S_n)\quad\text{and}\quad \deg(T_{\tla}:S_n\to S_n).\] 
\end{problem}

Let us remark that $\deg(T_{\alt}:S_n\to S_n)=\deg(T_{\tla}:S_n\to S_n)$ when $n$ is odd (this equality can fail when $n$ is even). In fact, for $n$ odd, $T_{\alt}$ and $T_{\tla}$ are dynamically equivalent. To see this, define the \emph{reverse complement} of a permutation $\pi=\pi_1\cdots\pi_n$ to be the permutation $\rc(\pi)=(n+1-\pi_n)(n+1-\pi_{n-1})\cdots(n+1-\pi_1)$. One can show that $T_{\alt}(\rc(\pi))=\rc(T_{\tla}(\pi))$ whenever $\pi\in S_n$ and $n$ is odd. 

We now turn our attention to the stack-sorting map $s$. 

\begin{problem}
Improve the estimates for $\lim\limits_{n\to \infty}\deg(s:S_n\to S_n)^{1/n}$ from Theorem \ref{Thm2}.
\end{problem}

Using the ``decomposition lemma" described in \cite{DefantCounting}, we have computed $|s^{-1}(s(\pi))|^{1/n}$ for several random permutations in $S_n$ in order to gauge the size $\lim\limits_{n\to \infty}\deg(s:S_n\to S_n)^{1/n}$. We first computed this quantity for $1000$ random permutations with $n=100$; the mean and standard deviation were $1.69$ and $0.09$. We then tried $100$ random permutations with $n=300$; the mean and standard deviation were $1.70$ and $0.06$. 

\begin{conjecture}
The value of the limit $\lim\limits_{n\to \infty}\deg(s:S_n\to S_n)^{1/n}$ lies in the interval $(1.68,1.73)$. 
\end{conjecture}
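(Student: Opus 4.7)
The plan is to tighten the two bounds of Theorem~\ref{Thm2} separately, since neither can be reached by a straightforward refinement of the argument given there. Write $d_n=\deg(s:S_n\to S_n)$ and $r=\lim_{n\to\infty}d_n^{1/n}$.

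For the lower bound $r>1.68$, the first step is computational: using the preimage-enumeration formula of \cite{DefantCounting,DefantPreimages} in terms of valid hook configurations, one can evaluate $d_n=\frac{1}{n!}\sum_{\pi\in S_n}|s^{-1}(\pi)|^2$ recursively for moderate $n$, each new value yielding an improved estimate $a_n^{1/n}\leq r$. However, since $a_n^{1/n}$ approaches $r$ only at rate $O((\log n)/n)$, pure computation alone is unlikely to cross $1.68$ for any feasible $n$. I would therefore also aim to strengthen inequality \eqref{Eq1}: its proof embeds each pair in $S_{m-1}^2\times S_{n-1}^2$ into $S_{m+n-1}^2$ only via the single separator construction $(\widetilde\pi,\widetilde\sigma)\mapsto\widetilde\pi(m+n-1)\widetilde\sigma$, so by accounting for the many other interleavings compatible with stack-sorting one should remove the factor $m+n-1$ and obtain the clean submultiplicativity $d_{m-1}d_{n-1}\leq d_{m+n-1}$; combining this with moderate computation would then deliver the required lower bound.

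For the upper bound $r<1.73$, the estimate $|s^{-1}(\pi)|\leq C_n$ from Lemma~\ref{Lem1} must be replaced by one capturing \emph{average} fibre sizes. The natural attack is to derive a recursion $d_n\leq\alpha\,d_{n-1}$ with $\alpha<1.73$ from the decomposition $s(LnR)=s(L)s(R)n$: a pair $(\pi,\pi')=(LnR,L'nR')$ satisfies $s(\pi)=s(\pi')$ iff the two concatenations $s(L)s(R)$ and $s(L')s(R')$ agree as words of length $n-1$. Partitioning pairs by the positions of $n$ in $\pi,\pi'$ and by the value sets of $L$ and $L'$ reduces $n!\,d_n$ to a sum involving smaller $d_k$'s, and the desired exponential bound would follow from a careful count of how many ways a length-$(n-1)$ word can split into two halves lying in the image of~$s$.

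The main obstacle is unquestionably the upper bound. The Catalan worst-case $|s^{-1}(\pi)|=C_n$ is attained only on very special $\pi$, and closing the gap from $4$ down to $1.73$ requires sharp control of $\sum_\pi|s^{-1}(\pi)|^2$ rather than merely of its maximum. Via the valid-hook-configuration formula this reduces to proving a concentration or second-moment estimate for a combinatorially explicit random variable on $S_n$ that has no obvious independence or martingale structure. Identifying the right generating-function or bijective handle on valid hook configurations, sharp enough to translate into an exponentially tight variance bound, is in my view the single hardest step in the program.
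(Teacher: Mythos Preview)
The statement you are attempting to prove is explicitly a \emph{conjecture} in the paper, not a theorem; the paper offers no proof whatsoever. The only support given is heuristic: the authors sampled $|s^{-1}(s(\pi))|^{1/n}$ for random $\pi$ with $n=100$ and $n=300$ and observed means near $1.69$--$1.70$, which motivated the guessed interval $(1.68,1.73)$. So there is nothing on the paper's side to compare your argument against.

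What you have written is not a proof but a research outline, and you appear to be aware of this. A few specific remarks on the plan itself. For the lower bound, your suggestion that the factor $m+n-1$ in \eqref{Eq1} can be removed by ``accounting for the many other interleavings'' is not justified: the construction in the paper already uses an \emph{arbitrary} subset $A\subseteq[m+n-2]$, so the interleaving freedom has been fully exploited; the factor $m+n-1$ arises because the position of the entry $m+n-1$ is forced in the image, not because interleavings were undercounted. Thus the route to a clean supermultiplicativity $d_{m-1}d_{n-1}\le d_{m+n-1}$ would require a genuinely new idea, not a refinement of the existing embedding. For the upper bound, you correctly identify that replacing the pointwise Catalan bound by a second-moment estimate is the crux, but nothing in your sketch indicates how to obtain $\alpha<1.73$; the decomposition $s(LnR)=s(L)s(R)n$ yields recursions that, without further input, recover only the Catalan growth rate $4$. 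In short, the proposal names reasonable directions but contains no step that would actually establish either inequality, and the statement remains open.
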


Let us also recall Conjecture~\ref{Conj1} from Section~\ref{Sec:Solitaire}, which states that \[\lim\limits_{n\to\infty}\deg(\mathscr B:\Part(n)\to\Part(n))=3.\]

Theorem \ref{Thm3} pins down, up to a constant factor, the asymptotics of $\max\limits_{\substack{f:X\to X\\ |X|=n}}\dfrac{\deg(f^k)}{\deg(f)^\gamma}$ as $n\to\infty$. It would be nice to improve the constants appearing in this theorem. Of particular interest is when $\gamma=2-1/2^{k-1}$ since this is the minimum $\gamma$ for which the second statement of the theorem applies. Thus, we have the following more specific problem. 

\begin{question}\label{Quest1}
For fixed $k\geq 2$, does the limit \[\lim_{n\to\infty}\max_{\substack{f:X\to X\\ |X|=n}}\frac{\deg(f^k)}{\deg(f)^{2-1/2^{k-1}}}\frac{1}{n^{1-1/2^{k-1}}}\] exist? If so, what is its value? 
\end{question}

Even answering Question \ref{Quest1} for $k=2$ would be quite interesting. So far, we know from Theorem~\ref{Thm3} that, if this limit exists, its value is between $3^{-3/2}\approx 0.19245$ and $1$. 

\section{Acknowledgements}\label{sec:acknowledgements}
We thank Vic Reiner for suggesting that we examine the Bulgarian solitaire map and thank Yan Sheng Ang for showing us the statement and proof of Theorem~\ref{Thm7}.

\end{document}